\pgfplotsset{compat=1.15}
\newtheorem{thm}{Theorem}[section]
\newtheorem{lemma}[thm]{Lemma}
\newtheorem{prop}[thm]{Propsition}
\theoremstyle{definition}
\newtheorem{definition}[thm]{Definition}
\newtheorem{example}[thm]{Example}
\newtheorem{remark}[thm]{Remark}
\newtheorem{question}[]{Question}
\newcommand\ddd{\mathrm{d}}
\newcommand\holder{\mathrm{H\ddot{o}lder}}
\newcommand\bR{\mathbb{R}}
\newcommand\bQ{\mathbb{Q}}
\newcommand\bN{\mathbb{N}}
\newcommand\bZ{\mathbb{Z}}
\def \l {\left}
\def \r {\right}
\begin{document}
\renewcommand{\thefootnote}{\fnsymbol {footnote}}
\title{Two-weight Norm Inequalities for Local Fractional Integrals on Gaussian Measure Spaces}
\footnotetext{{}{2010 \emph{Mathematics Subject Classification}: Primary 42B35; Secondary 42B20, 42B25.}} \footnotetext{{}\emph{Key words and phrases}: Local fractional integral, local fractional maximal operator,  two-weight inequality, Gaussian measure space.}
\setcounter{footnote}{0}\author{Boning Di, Qianjun He\footnote{Correspoding author}, Dunyan Yan}
\date{}
\maketitle

\begin{abstract}
In this paper, the authors establish the two-weight boundedness of the local fractional maximal operators and local fractional integrals on Gaussian measure spaces associated with the local weights. More precisely, the authors first obtain the two-weight weak-type estimate for the local-$a$ fractional maximal operators of order $\alpha$ from $L^{p}(v)$ to $L^{q,\infty}(u)$ with $1\leq p\leq q<\infty$ under a condition of $(u,v)\in \bigcup_{b'>a} A_{p,q,\alpha}^{b'}$, and then obtain the two-weight weak-type estimate for the local fractional integrals. In addition, the authors obtain the two-weight strong-type boundedness of the local fractional maximal operators under a condition of $(u,v)\in\mathscr{M}_{p,q,\alpha}^{6a+9\sqrt{d}a^2}$ and the two-weight strong-type boundedness of the local fractional integrals. These estimates are established by the radialization method and dyadic approach.
\end{abstract}

\section{Introduction}\label{DHY_2101_introduction}
The Gaussian measure space, denoted by $\l(\bR^d, |\cdot|, \gamma\r)$, is the Euclidean space $\bR^d$ endowed with the Euclidean distance $|\cdot|$ and the Gaussian probability measure $\gamma$, where
\[\ddd\gamma(x):=\pi^{-d/2} e^{-|x|^2} \ddd x.\]
The Gaussian harmonic analysis extends the classical results obtained in harmonic analysis of trigonometric expansions to orthogonal polynomial expansions~\cite{MS_1965}. On the other hand, Gaussian harmonic analysis is widely used in the second quantization~\cite{Nelson_1973}, Malliavin calculus~\cite{Nualart_2006}, hypercontractivity~\cite{BE_1985}, and geometric applications\cite{Bakry_1988}, etc.

On Gaussian measure spaces, due to the special locally doubling and reverse doubling property which will be shown in Section \ref{DHY_2101_preliminaries}, we concentrate on the \textit{local Hardy-Littlewood maximal operator $M^{a}$} defined by
\[M^{a}(f)(x):=\sup_{Q\in\mathscr{Q}_a(x)} \frac{1}{\gamma(Q)} \int_Q |f(y)|\ddd \gamma(y),\]
where
\[\mathscr{Q}_a(x)=\{Q\in\mathscr{Q}_a: Q\ni x\}, \quad \mathscr{Q}_a=\{Q\subset \bR^d: \ell(Q)\leq am(c_Q)\}, \quad  m(x)=\min\{1, {1}/{|x|}\}\]
with $c_Q$ denoting the center of the cube $Q$ and $\ell(Q)$ denoting the side length of the cube $Q$. Furthermore, the \textit{local fractional Hardy-Littlewood maximal operator $M_{\alpha}^{a}$} is defined by
\[M^{a}_{\alpha}(f)(x):=\sup_{Q\in\mathscr{Q}_a(x)}\frac{1}{[\gamma(Q)]^{1-\dot{\alpha}}}\int_Q |f(y)|\ddd \gamma(y)\]
with $\dot{\alpha}=\alpha/d$. Here all the sides of the cubes are parallel to the coordinate axes.

Recently the study of Gaussian harmonic analysis has aroused extensive attention. In 2007, Mauceri and Meda~\cite{MM_2007} introduced the local maximal operator and used it to develop the singular integral operator theory on the Gaussian measure spaces; in the same year, Aimar~et~al.~\cite{AFS_2007} obtained the weak type $(1,1)$ inequalities for the Gaussian Riesz transform and a general maximal operator which dominates the Ornstein-Uhlenbeck maximal operator; then in 2010, Liu and Yang\cite{LY_2010} obtained the boundedness of the \textit{local fractional integral operator} defined by
\[I_{\alpha}^{a}(f)(x):=\int_{B(x, am(x))} \frac{f(y)}{[\gamma(B(x,|x-y|))]^{1-\dot{\alpha}}}\ddd\gamma(y)\]
and the local fractional maximal operator $M^{a}_{\alpha}$ on the Gaussian Lebesgue spaces; in 2014, Liu~et~al.~\cite{LSY_2014} obtained the boundedness of the local fractional integral operator and the local maximal operator on the Gaussian Morrey-type spaces; later in 2016, Wang~et~al.~\cite{WZL_2016} characterized the one-weight boundedness of $M^a$ on Gaussian Lebesgue spaces by the \textit{local $A_{p}^a$ weights} defined as
\[[\omega]_{A_{p}^a}=\sup_{B\in\mathscr{B}_{a}}\l(\frac{1}{\gamma(B)}\int_B \omega(x) \ddd\gamma(x)\r)\l(\frac{1}{\gamma(B)}\int_B \omega(x)^{1-p'}\ddd\gamma(x)\r)^{p-1}<\infty,\]
and obtained the one-weight boundedness of $M_{\alpha}^a$ on the Gaussian Lebesgue spaces by the \textit{local $A_{p,q}^a$ weights} defined as
\begin{equation*}
[w]_{A_{p,q}^a}=\sup_{B\in\mathscr{B}_{a}} \l(\frac{1}{\gamma(B)}\int_B \omega(x)^q\ddd \gamma(x)\r)^{1/q}\l(\frac{1}{\gamma(B)}\int_B \omega(x)^{-p'}\ddd \gamma(x)\r)^{1/{p'}}<\infty;
\end{equation*}
more recently in 2020,  Lin and Mao~\cite{LM_2020} established the one-weight norm inequalities associated with the local $A_{p,q}^a$ weights for the fractional operators $I_{\alpha}^a$ and $M_{\alpha}^a$ on the Gaussian measure spaces.

Based on the results above, this article aims to establish the two-weight norm inequalities for the local fractional maximal and integral operators with respect to the local Muckenhoupt type weights and the local Sawyer type weights. To state our main results, we introduce the following local Muckenhoupt type weights.
\begin{definition}
Given $0<a<\infty$, $0\leq\dot{\alpha}<1$, $\alpha=\dot{\alpha}d$ and $1< p, q<\infty$, we say that a pair of weights $(u,v)\in A_{p,q,\alpha}^a$ if
\begin{align*}
[u,v]_{A_{p,q,\alpha}^a}:=\sup_{Q\in\mathscr{Q}_a} \gamma(Q)^{(\dot{\alpha}+1/q-1/p)}\l(\frac{1}{\gamma(Q)} \int_Q u(x) \ddd \gamma(x)\r)^{1/q}\l(\frac{1}{\gamma(Q)} \int_Q v(x)^{1-p'}\ddd\gamma(x)\r)^{1/{p'}}<\infty.
\end{align*}
For $p=1$, we say $(u,v)\in A_{1,q,\alpha}^a$ if
\[[u,v]_{A_{1,q,\alpha}^a}:=\sup_{Q\in\mathscr{Q}_a} \mathop{\mathrm{ess}\sup}\limits_{x\in Q}^{}\gamma(Q)^{(\dot{\alpha}+1/q-1)} \l(\frac{1}{\gamma(Q)} \int_Q  u(x) \ddd \gamma(x)\r)^{1/q} v(x)^{-1}<\infty,\]
where the essential supremum is associated with the measure $\gamma$.
\end{definition}

\begin{remark}\label{DHY_1}
Note that if $\alpha+d/q-d/p=0$ and
\[u(x)=\omega(x)^q,\quad v(x)=\omega(x)^p,\]
then the two-weight condition $A_{p,q,\alpha}^a$ goes back to the one-weight condition $A_{p,q}^a$ first introduced by Wang~et~al~\cite{WZL_2016}; if $\alpha+d/q-d/p<0$, by letting suitable $\gamma(Q)\to 0$, we conclude the fact $A_{p,q,\alpha}^a=\emptyset$; finally if $\alpha+d/q-d/p>0$, it is obvious that $u(x)=v(x)=1$ satisfy the $A_{p,q,\alpha}^a$ condition, i.e., $A_{p,q,\alpha}^a\neq\emptyset$.
\end{remark}
One of our main results in this paper is the following two-weight weak-type estimate for $M_{\alpha}^a$.

\begin{thm}\label{GM_2001_thm4.3}
Given $a\in(0,\infty)$, $1\leq p\leq q<\infty$, $0\leq\alpha<d$ and a pair of weights $(u,v)$, if 
\[(u,v)\in \bigcup_{b'>a}A_{p,q,\alpha}^{b'},\]
then $M_{\alpha}^a$ is bounded from $L^p\l(\bR^d, v,\gamma\r)$ to $L^{q,\infty}\l(\bR^d, u,\gamma\r)$, that is, there exists a constant $C>0$ such that
\[\int_{\{x\in\bR^d: M_{\alpha}^a(f)(x)>\lambda\}}u(x)\ddd\gamma(x)\leq \frac{C}{\lambda^q}\l(\int_{\bR^d}|f(x)|^p v(x)\ddd\gamma(x)\r)^{q/p}\]
holds for all $\lambda>0$.
\end{thm}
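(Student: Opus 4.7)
The approach is the classical selection-plus-doubling argument for weighted weak-type maximal inequalities, adapted to the local Gaussian framework. First I fix some $b' > a$ for which $(u,v) \in A_{p,q,\alpha}^{b'}$, and assume without loss of generality that $f \ge 0$. For $\lambda > 0$, set $E_\lambda = \{x \in \bR^d : M_\alpha^a f(x) > \lambda\}$; by the definition of $M_\alpha^a$, every $x \in E_\lambda$ admits a witness cube $Q_x \in \mathscr{Q}_a(x)$ satisfying
\[
\int_{Q_x} f \, \ddd\gamma > \lambda \, \gamma(Q_x)^{1-\dot\alpha}.
\]
From the family $\{Q_x\}_{x \in E_\lambda}$ I would extract, via a Vitali-type covering, a countable pairwise disjoint subfamily $\{Q_j\}_j$ together with a dilation constant $\kappa \in (1, b'/a]$ such that $E_\lambda \subset \bigcup_j \kappa Q_j$; the constraint $\kappa \le b'/a$ is what ensures that the dilated cubes $\kappa Q_j$ lie in $\mathscr{Q}_{b'}$ and are therefore admissible in the Muckenhoupt-type hypothesis. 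When the classical $5r$-covering is too coarse (i.e., $b'/a < 5$), a Besicovitch-type refinement producing dilations $1+\epsilon$ for any $\epsilon>0$, at the price of bounded, dimension-dependent overlap, is invoked.

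Next I estimate $u(\kappa Q_j)$. Writing $\sigma := v^{1-p'}$, Hölder applied to the witness inequality gives
\[
\lambda \, \gamma(Q_j)^{1-\dot\alpha} < \int_{Q_j} f \, \ddd\gamma \le \Bigl(\int_{Q_j} f^p v \, \ddd\gamma\Bigr)^{1/p} \Bigl(\int_{Q_j} \sigma \, \ddd\gamma\Bigr)^{1/p'},
\]
while the $A_{p,q,\alpha}^{b'}$ condition applied to $\kappa Q_j$ reads
\[
u(\kappa Q_j) \le [u,v]_{A_{p,q,\alpha}^{b'}}^{q} \, \gamma(\kappa Q_j)^{q(1-\dot\alpha)} \, \Bigl(\int_{\kappa Q_j} \sigma \, \ddd\gamma\Bigr)^{-q/p'}.
\]
Combining these via the trivial inclusion $\int_{Q_j}\sigma \le \int_{\kappa Q_j}\sigma$ together with the local doubling estimate $\gamma(\kappa Q_j) \le C_{a,b'} \, \gamma(Q_j)$ for cubes in $\mathscr{Q}_{b'}$ (available from Section~\ref{DHY_2101_preliminaries}) produces
\[
\lambda^q u(\kappa Q_j) \le C \, [u,v]_{A_{p,q,\alpha}^{b'}}^{q} \Bigl(\int_{Q_j} f^p v \, \ddd\gamma\Bigr)^{q/p}.
\]
Summing in $j$, using disjointness of $\{Q_j\}$ (or the bounded overlap from the Besicovitch refinement) and the elementary inequality $\sum_j a_j^{q/p} \le (\sum_j a_j)^{q/p}$, valid because $q/p \ge 1$, then yields the desired weak-type bound.

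The principal obstacle is matching the covering dilation to the possibly small margin $b' - a$ afforded by the hypothesis: since $b'$ may be only slightly larger than $a$, a standard $5r$-Vitali covering is too wasteful, and one must invoke a refined covering with dilation arbitrarily close to $1$. Once this is available, together with the locally doubling nature of $\gamma$ on cubes in $\mathscr{Q}_{b'}$, the remaining computation is routine. The endpoint $p=1$ is handled analogously: Hölder is replaced by $\int_{Q_j} f \, \ddd\gamma \le \|v^{-1}\|_{L^\infty(Q_j,\gamma)} \int_{Q_j} f v \, \ddd\gamma$, and $A_{1,q,\alpha}^{b'}$ applied to $\kappa Q_j$ gives $u(\kappa Q_j) \, \|v^{-1}\|^q_{L^\infty(\kappa Q_j,\gamma)} \le [u,v]_{A_{1,q,\alpha}^{b'}}^{q} \, \gamma(\kappa Q_j)^{q(1-\dot\alpha)}$; combining these with the monotonicity $\|v^{-1}\|_{L^\infty(Q_j,\gamma)} \le \|v^{-1}\|_{L^\infty(\kappa Q_j,\gamma)}$ and local doubling closes this case, with the final summation relying on $q \ge 1$.
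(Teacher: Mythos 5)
Your proposal is correct and follows essentially the same route as the paper: witness cubes dilated by a factor at most $b'/a$ so that they land in $\mathscr{Q}_{b'}$, the $A_{p,q,\alpha}^{b'}$ condition applied to the dilated cubes via H\"older (and via the essential supremum of $v^{-1}$ when $p=1$), and a final summation using $p\le q$. The covering refinement you invoke informally is exactly the Besicovitch-type theorem of Sawano that the paper cites (dilation $k=b'/a$ with $N(d,k)$ disjoint subfamilies); the only cosmetic differences are that the paper first radializes the weights to $\mathscr{A}_{p,q,\alpha}^{b'}$ with the measure $\gamma'$ and packages your H\"older step as a separate testing-condition lemma.
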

In addition, we get the following two-weight weak-type estimate for the \textit{local fractional integral opeartor $\tilde{I}_{\alpha}^a$ with cubes on Gaussian measure spaces} defined by
\[\tilde{I}_{\alpha}^a(f)(x):=\int_{Q\l(x, am(x)\r)} \frac{f(y)}{\l[\gamma\l(Q(x,|x-y|)\r)\r]^{1-\dot{\alpha}}} \ddd\gamma(y),\]
where $Q\l(x, am(x)\r)$ denotes the cube with $c_Q=x$ and $\ell(Q)=am(x)$.
\begin{thm}\label{TWWTEFI}
Given $a\in(0,\infty)$, $1\leq p\leq q<\infty$, $0<\alpha<d$ and a pair of weights $(u,v)$, if
\[(u,v)\in\bigcup_{a<b'}\bigcup_{0<\beta'<\alpha} A_{p,q,\beta'}^{b'},\]
then $\tilde{I}_{\alpha}^a$ is bounded from $L^p(\bR^d, v, \gamma)$ to $L^{q,\infty}(\bR^d, u, \gamma)$, that is, there exists a constant $C>0$ such that
\[\int_{\{x\in\bR^d: \tilde{I}_{\alpha}^a(f)(x)>\lambda\}} u(x) \ddd\gamma(x)\leq \frac{C}{\lambda^q}\l(\int_{\bR^d} |f(x)|^p v(x)\ddd\gamma(x)\r)^{q/p}\]
holds for all $\lambda>0$.
\end{thm}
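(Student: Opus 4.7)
The plan is to reduce Theorem~\ref{TWWTEFI} to the weak-type bound already established for the local fractional maximal operator in Theorem~\ref{GM_2001_thm4.3}, via a pointwise estimate of the form $\tilde{I}_\alpha^a(f)(x)\leq C\,M_\beta^{b_0}(f)(x)$ for appropriate parameters $\beta\in(0,\alpha)$ and $b_0\in(a,b')$. Here $(u,v)\in A_{p,q,\beta}^{b'}$ is furnished by the hypothesis and $b_0$ is chosen slightly larger than $a$ but still less than $b'$. Once this pointwise domination is in hand, the weak-type conclusion is immediate: Theorem~\ref{GM_2001_thm4.3} applies to $M_\beta^{b_0}$ because $(u,v)\in\bigcup_{b''>b_0}A_{p,q,\beta}^{b''}$ (as $b'>b_0$), and the resulting weak $(p,q)$ bound transfers to $\tilde{I}_\alpha^a$ after rescaling $\lambda\mapsto\lambda/C$.

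For the pointwise domination, I would fix $x\in\bR^d$ and decompose $Q(x,am(x))$ into the dyadic shells $Q_k\setminus Q_{k+1}$, where $Q_k:=Q(x,2^{-k}am(x))$ for $k\geq 0$; in particular $Q_0$ is the full domain of integration. On the $k$th shell one has $|x-y|_\infty>2^{-k-2}am(x)$, so $Q(x,|x-y|)\supseteq Q_{k+2}$ and the integrand is pointwise dominated by $\gamma(Q_{k+2})^{\dot\alpha-1}|f(y)|$. Splitting the exponent as $\dot\alpha-1=(\dot\alpha-\dot\beta)+(\dot\beta-1)$ and invoking local doubling of $\gamma$ on cubes of $\mathscr{Q}_{b_0}$ to trade $\gamma(Q_{k+2})^{\dot\beta-1}$ for a constant multiple of $\gamma(Q_k)^{\dot\beta-1}$ reveals a factor of $M_\beta^{b_0}(f)(x)$, leaving a residual weight $\gamma(Q_{k+2})^{\dot\alpha-\dot\beta}$ in front.

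The summability of $\sum_{k\geq 0}\gamma(Q_{k+2})^{\dot\alpha-\dot\beta}$ is then handled by the local reverse doubling of $\gamma$ recalled in Section~\ref{DHY_2101_preliminaries}: there is $\theta\in(0,1)$, independent of $k$, with $\gamma(Q_{k+1})\leq\theta\,\gamma(Q_k)$, so that $\gamma(Q_{k+2})\leq\theta^{k+2}$, and the geometric series $\sum_{k}\theta^{(k+2)(\dot\alpha-\dot\beta)}$ converges since $\dot\alpha-\dot\beta>0$. Assembling the estimates yields $\tilde{I}_\alpha^a(f)(x)\leq C\,M_\beta^{b_0}(f)(x)$ with $C$ independent of $x$, and Theorem~\ref{GM_2001_thm4.3} closes the argument.

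The step I expect to require the most care is the bookkeeping of admissibility scales. The hypothesis supplies $(u,v)\in A_{p,q,\beta'}^{b'}$ only for some specific $b'>a$ and $\beta'\in(0,\alpha)$, so the auxiliary scale $b_0$, the cubes $Q_k$, and the doubling step that moves back from $Q_{k+2}$ to $Q_k$ must all be arranged to stay within $\mathscr{Q}_{b_0}\subset\mathscr{Q}_{b'}$, ensuring that the local doubling and reverse doubling constants are uniform and that Theorem~\ref{GM_2001_thm4.3} applies with the chosen parameters. Since $\ell(Q_k)=2^{-k}am(x)\leq am(c_{Q_k})\leq b_0\,m(c_{Q_k})$ once $b_0>a$, these containments are automatic, and every implicit constant absorbs into a single $C=C(a,b_0,\alpha,\beta,d)$.
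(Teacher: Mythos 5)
Your proposal is correct, and it reaches the conclusion by a genuinely more direct route than the paper. The paper proves Theorem~\ref{TWWTEFI} through the Welland-type inequality of Lemma~\ref{LM_2020_lemma3.4}, which dominates $\tilde{I}_{\alpha}^a(f)$ pointwise by the geometric mean $\bigl(M_{\alpha-\varepsilon}^{a}f\bigr)^{1/2}\bigl(M_{\alpha+\varepsilon}^{(a+b)/2}f\bigr)^{1/2}$; it then invokes H\"older's inequality for weak Lebesgue spaces with exponents $q_1=q_2=2q$ and applies Theorem~\ref{GM_2001_thm4.3} twice, once to each maximal operator, using the nesting $A_{p,q,\alpha-\varepsilon}^{b}\subset A_{p,q,\alpha+\varepsilon}^{b}$ (a consequence of $\gamma(Q)<1$) to cover the higher-order factor. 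Your argument short-circuits this: the shell decomposition you describe is essentially the computation in Case 1 of the paper's Lemma~\ref{LM_2020_lemma3.4}, which yields $\tilde{I}_{\alpha}^a(f)(x)\lesssim[\gamma(Q(x,am(x)))]^{\dot{\alpha}-\dot{\beta}}M_{\beta}^a(f)(x)$, and since $\gamma$ is a probability measure the prefactor is at most $1$, giving the single pointwise domination $\tilde{I}_{\alpha}^a(f)\lesssim M_{\beta}^a(f)$ for any $\beta\in(0,\alpha)$. This dispenses with the second (higher-order, larger-scale) maximal operator and the weak-type H\"older step entirely. Your bookkeeping is sound: the cubes $Q_k=Q(x,2^{-k}am(x))$ already lie in $\mathscr{Q}_a$, the doubling step from $Q_{k+2}$ back to $Q_k$ uses only the locally doubling constant on $\mathscr{Q}_a$, and the geometric decay of $\gamma(Q_{k+2})^{\dot{\alpha}-\dot{\beta}}$ follows from local reverse doubling exactly as you say; in fact you could even take $b_0=a$ and apply Theorem~\ref{GM_2001_thm4.3} directly to $M_{\beta'}^a$ with the hypothesized $(u,v)\in A_{p,q,\beta'}^{b'}$, $b'>a$. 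What the paper's heavier machinery buys is reusability: the same Welland lemma is what drives the strong-type Theorem~\ref{GM_2001_thm6.5}, whereas your simplification exploits finiteness of $\gamma$ and is tailored to the present statement.
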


Based on the radialization method in \cite{DHY_2012} and inspired by \cite{GM_2001}, we prove Theorem~\ref{GM_2001_thm4.3} by introducing a radial version of the local fractional maximal operator $M_{\alpha}^a$ and the $A_{p,q,\alpha}^a$ weights. So that we can use the dyadic analysis on Gaussian measure spaces to prove the desired conclusion. Then Theorem~\ref{TWWTEFI} comes from an extended Welland type inequality on Gaussian measure spaces (see Lemma \ref{LM_2020_lemma3.4} below) and the boundedness of $M_{\alpha}^a$. Furthermore using a similar method, we also establish the two-weight strong-type estimates for $M_{\alpha}^a$ and $\tilde{I}_{\alpha}^a$ under the following local Sawyer type condition.

\begin{definition}
Let $a\in(0,\infty)$, $\alpha\in[0,d)$ and $1<p\leq q<\infty$. We say that a pair of weights $(u,v)$ satisfies the \textit{local-$a$ testing condition} if
\[[u,v]_{\mathscr{M}_{p,q,\alpha}^a}:=\sup_{Q\in\mathscr{Q}_a} \l[\int_Q \l(M_{\alpha}^a(v^{1-p'}\chi_Q)(x)\r)^q u(x) \ddd  \gamma(x)\r]^{1/q}\l(\int_Q v(x)^{1-p'}\ddd\gamma(x)\r)^{-1/p}<\infty.\]
In this situation, we write $(u,v)\in \mathscr{M}_{p,q,\alpha}^a$.
\end{definition}

\begin{remark}
We can rewrite the definition of $[u,v]_{\mathscr{M}_{p,q,\alpha}^a}$ as
\begin{align*}
\sup_{Q\in\mathscr{Q}_a}\l[\int_Q\l(\sup_{Q'\in\mathscr{Q}_a(x)}\frac{1}{\gamma(Q')^{1-\dot{\alpha}}}\int_{Q'\cap Q} v(y)^{1-p'}\ddd\gamma'(y)\r)^q u(x)\ddd\gamma(x)\r]^{\frac{1}{q}}\l(\int_Q v(x)^{1-p'}\ddd\gamma(x)\r)^{-\frac{1}{p}}.
\end{align*}
Thereby when $d/q-d/p+\alpha<0$, similar to Remark \ref{DHY_1}, we conclude $\mathscr{M}_{p,q,\alpha}^a =\emptyset$ by letting suitable $Q=Q'$ and $\gamma(Q')=\gamma(Q)\to 0$.
\end{remark}

Then we obtain the two-weight strong-type boundedness of $M_{\alpha}^a$ and $\tilde{I}_{\alpha}^a$ as follows.
\begin{thm}\label{GM_2001_thm3.1}
Given $a\in(0,\infty)$, $1< p\leq q<\infty$, $0\leq\alpha<d$ and a pair of weights $(u,v)$, if
\[(u,v)\in \mathscr{M}_{p,q,\alpha}^{6a+9\sqrt{d}a^2},\]
then $M_{\alpha}^a$ is bounded from $L^p(\bR^d, v, \gamma)$ to $L^{q}(\bR^d, u, \gamma)$, that is, there exists a constant $C>0$ such that
\[\l(\int_{\bR^d}\Big(M_{\alpha}^a(f)(x)\Big)^qu(x)\ddd\gamma(x)\r)^{1/q}\leq C \l(\int_{\bR^d}|f(x)|^p v(x)\ddd\gamma(x)\r)^{1/p}.\]
\end{thm}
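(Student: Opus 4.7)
The plan is to adapt the classical Sawyer argument for the fractional maximal operator to the local Gaussian setting by combining the radialization technique used for Theorem~\ref{GM_2001_thm4.3} with a dyadic Calder\'on--Zygmund decomposition on level sets.

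First, I would pass from $M_\alpha^a$ to a dyadic model $M_\alpha^{a,\mathscr{D}}$ that is pointwise comparable up to a controlled dilation of the admissibility parameter. Following the radialization strategy of \cite{DHY_2012}, one introduces a family $\mathscr{D}$ of dyadic cubes adapted to the Gaussian geometry, arranged so that every admissible cube $Q\in\mathscr{Q}_a$ with $Q\ni x$ is contained in some $\widetilde Q\in\mathscr{D}$ satisfying $\ell(\widetilde Q)\lesssim \ell(Q)$ and $\widetilde Q\in\mathscr{Q}_b$ for a universal $b$. The explicit value $b=6a+9\sqrt{d}\,a^2$ arises from tracking how $m(x)=\min\{1,1/|x|\}$ varies across a cube of side at most $am(c_Q)$: the linear term in $a$ accounts for enlarging once to the containing dyadic cube and once more to absorb a shifted center, and the quadratic term $9\sqrt{d}\,a^2$ comes from estimating $|m(c_Q)-m(c_{\widetilde Q})|$ via the $1$-Lipschitz property of $m$. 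Verifying this admissibility relation is the main geometric obstacle, since it must hold for every cube produced by both the radial model and the subsequent stopping-time argument.

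With the dyadic reduction in place, fix $f\geq 0$ and for each $k\in\bZ$ set $\Omega_k=\{x:M_\alpha^{a,\mathscr{D}}f(x)>2^k\}$. A standard stopping-time argument produces pairwise disjoint maximal dyadic cubes $\{Q_{k,j}\}_j\subset\mathscr{D}$ decomposing $\Omega_k$, with
\[
2^k<\frac{1}{\gamma(Q_{k,j})^{1-\dot{\alpha}}}\int_{Q_{k,j}}f\,\ddd\gamma\leq C\,2^k,
\]
the upper bound following from the local doubling property of $\gamma$ on $\mathscr{Q}_b$. Setting $E_{k,j}:=Q_{k,j}\setminus\Omega_{k+1}$ gives pairwise disjoint sets, and a sparse thinning ensures $u(Q_{k,j})\leq 2\,u(E_{k,j})$ at the cost of choosing a subfamily indexed by a sparser set of generations. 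Then
\[
\int_{\bR^d}\l(M_\alpha^{a,\mathscr{D}}f\r)^q u\,\ddd\gamma\;\lesssim\; \sum_{k,j}2^{kq}u(E_{k,j})\;\lesssim\; \sum_{k,j}\l(\frac{1}{\gamma(Q_{k,j})^{1-\dot{\alpha}}}\int_{Q_{k,j}}f\,\ddd\gamma\r)^q u(E_{k,j}).
\]

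Setting $\sigma:=v^{1-p'}$ and writing $f=(f\sigma^{-1})\sigma$, I apply H\"older's inequality inside each $Q_{k,j}$ and recast the resulting sum as the $L^q(u\,\ddd\gamma)$-norm of a discrete dyadic operator applied to $g:=(f\sigma^{-1})^{p-1}$ against $\sigma\,\ddd\gamma$. At this point the testing hypothesis $(u,v)\in\mathscr{M}_{p,q,\alpha}^{b}$ is invoked \emph{locally} on each $Q_{k,j}$: the integral $\int_{Q_{k,j}}(M_\alpha^a(\sigma\chi_{Q_{k,j}}))^q u\,\ddd\gamma$ is bounded by $[u,v]_{\mathscr{M}_{p,q,\alpha}^b}^{q}\,\sigma(Q_{k,j})^{q/p}$, and summation in $(k,j)$ — using the sparseness of $\{E_{k,j}\}$ together with the embedding $\ell^{p}\hookrightarrow \ell^{q}$ valid for $q\geq p$ — yields $C\|f\|_{L^p(v,\gamma)}^q$, closing the estimate. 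Apart from the geometric admissibility lemma highlighted above, the remaining steps are essentially a transcription of the Sawyer-type summation scheme of \cite{GM_2001} to the local Gaussian framework.
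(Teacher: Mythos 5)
Your overall strategy --- reduce $M_\alpha^a$ to a dyadic model via a geometric admissibility lemma, decompose by level sets, and feed the testing condition into a Sawyer-type summation --- is the same scheme the paper follows, and your account of where $6a+9\sqrt{d}a^2$ comes from (doubling the scale to reach a dyadic cube, tripling it, and tracking the drift of $m$ across the cube) matches the paper's Lemma~\ref{GM_2001_lemma3.2}. However, there are three genuine gaps. First, the reduction as you state it is impossible: an admissible cube $Q$ can straddle a dyadic boundary at every scale, so it need not be contained in any single dyadic $\widetilde Q$ with $\ell(\widetilde Q)\lesssim\ell(Q)$. The paper instead produces a dyadic $P\in\mathscr{Q}_{2a+3\sqrt{d}a^2}$ with $Q\subset 3P$ and a comparable average over $P$, and then works throughout with the pairs $(P_j^k,3P_j^k)$; your stopping cubes must be of this form (or you must use several shifted grids), which changes the bookkeeping. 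Second, the claim that a ``sparse thinning'' yields $u(Q_{k,j})\leq 2\,u(E_{k,j})$ is false for a general weight $u$: sparseness controls the $\gamma$-measure (or Lebesgue measure) of the sets $E_{k,j}=Q_{k,j}\setminus\Omega_{k+1}$, not their $u$-measure, and $u$ may be concentrated entirely inside $\Omega_{k+1}\cap Q_{k,j}$. Fortunately this inequality is not needed for your displayed chain of estimates, which only uses $\Omega_k\setminus\Omega_{k+1}\subset\bigcup_j E_{k,j}$.

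The third and most serious gap is that the final summation --- the actual heart of Sawyer's argument --- is not carried out. After H\"older's inequality the exponents do not close by ``sparseness plus $\ell^p\hookrightarrow\ell^q$'' alone: applying the testing condition cube by cube leaves you with a sum like $\sum_{k,j}\sigma(Q_{k,j})^{q/p'}\gamma(Q_{k,j})^{-(1-\dot\alpha)q}\bigl(\int_{Q_{k,j}}(f\sigma^{-1})^p\sigma\,\ddd\gamma\bigr)^{q/p}u(E_{k,j})$, which does not telescope. The paper resolves this by recasting $\sum_{k,j}\bigl(\ell(3P_j^k)^{\alpha-d}\int_{P_j^k}f\,\ddd\gamma'\bigr)^q u(E_j^k)$ as $\|T_K(f\sigma^{-1})\|_{L^q(\nu)}^q$ for a discrete averaging operator $T_K$ on $\bN\times\bZ$ with measure $\nu(j,k)=u(E_j^k)\bigl(\ell(3P_j^k)^{\alpha-d}\sigma(3P_j^k,\gamma')\bigr)^q$, proving $T_K:L^\infty(\sigma)\to L^\infty(\nu)$ trivially and $T_K:L^1(\sigma)\to L^{q/p,\infty}(\nu)$ from the testing condition (via a second maximal-disjoint selection among the $P_j^k$ with $T_K(h)>\lambda$ and the inequality $p\le q$), and then interpolating. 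You would need either to reproduce this interpolation step or to replace it with a principal-cubes (stopping-time with respect to $\sigma$) construction; as written, your proof does not close.
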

\begin{thm}\label{GM_2001_thm6.5}
Given $a\in(0,\infty)$, $1< p\leq q<\infty$, $0<\alpha<d$ and a pair of weights $(u,v)$, if 
\[(u,v)\in\bigcup_{6a+9\sqrt{d}a^2<b'}\bigcup_{0<\beta'<\alpha} \mathscr{M}_{p,q,\beta'}^{b'},\]
then $\tilde{I}_{\alpha}^a$ is bounded from $L^p(\bR^d, v, \gamma)$ to $L^{q}(\bR^d, u, \gamma)$, that is, there exists a constant $C>0$ such that
\[\l(\int_{\bR^d}\l(\tilde{I}_{\alpha}^a(f)(x)\r)^q u(x)\ddd\gamma(x)\r)^{1/q}\leq C\l(\int_{\bR^d} |f(x)|^p v(x)\ddd\gamma(x)\r)^{1/p}.\]
\end{thm}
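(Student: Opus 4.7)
The strategy parallels that of Theorem~\ref{TWWTEFI}: there the weak-type bound for $\tilde{I}_{\alpha}^a$ was deduced from the weak-type bound for $M_{\alpha}^a$ via the extended Welland type inequality (Lemma~\ref{LM_2020_lemma3.4}); here the same pattern should promote the strong-type estimate of Theorem~\ref{GM_2001_thm3.1} to $\tilde{I}_{\alpha}^a$. First I would apply Lemma~\ref{LM_2020_lemma3.4} to dominate $\tilde{I}_{\alpha}^a f(x)$ pointwise by a geometric product of two local fractional maximal functions, of the form
\[\tilde{I}_{\alpha}^a f(x) \leq C \bigl[M_{\beta_1}^{b} f(x)\bigr]^{\theta}\bigl[M_{\beta_2}^{b} f(x)\bigr]^{1-\theta}\]
for some $\theta\in(0,1)$ and appropriate fractional orders $\beta_1,\beta_2\in(0,\alpha)$ and enlarged locality parameter $b>6a+9\sqrt{d}a^2$, both of which are available inside the hypothesis union.

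Next I would take $q$-th powers, integrate against $u\,\ddd\gamma$, and split the integrand by H\"older's inequality into a product of two $L^{q_i}(u,\gamma)$-norms of $M_{\beta_i}^{b} f$, with exponents chosen so that $\theta/q_1+(1-\theta)/q_2=1/q$. For each factor, Theorem~\ref{GM_2001_thm3.1} applies directly: the hypothesis $(u,v)\in\mathscr{M}_{p,q,\beta'}^{b'}$ for some $\beta'\in(0,\alpha)$ and $b'>6a+9\sqrt{d}a^2$ should be arranged so as to imply $(u,v)\in\mathscr{M}_{p,q_i,\beta_i}^{b}$ for the two new pairs $(q_i,\beta_i)$ produced by the Welland-type splitting. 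Multiplying the two resulting $L^p(v,\gamma)\to L^{q_i}(u,\gamma)$ estimates then yields the desired strong-type bound.

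The main obstacle is the parameter calibration: one must show that $\beta_1,\beta_2,b,\theta,q_1,q_2$ can be chosen coherently so that the scaling identities forced by Lemma~\ref{LM_2020_lemma3.4} and H\"older's inequality are compatible with the exponent relation $\dot{\alpha}+1/q-1/p = \theta(\dot{\beta}_1+1/q_1-1/p) + (1-\theta)(\dot{\beta}_2+1/q_2-1/p)$, and so that the assumed Sawyer-type class at $(\beta',b')$ contains the two shifted classes at $(\beta_i,b)$ that feed into Theorem~\ref{GM_2001_thm3.1}. Provided this bookkeeping works out --- as it does in the weak-type analogue Theorem~\ref{TWWTEFI} --- the remainder of the argument is a routine combination of H\"older's inequality and the strong-type maximal bound.
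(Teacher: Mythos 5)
Your overall strategy is exactly the paper's: dominate $\tilde{I}_{\alpha}^a f$ pointwise by a geometric mean of two local fractional maximal functions via Lemma~\ref{LM_2020_lemma3.4}, apply H\"older, and invoke Theorem~\ref{GM_2001_thm3.1} on each factor. However, the "bookkeeping" you defer is where the proof actually lives, and two of the parameter choices you sketch would fail as stated. First, Lemma~\ref{LM_2020_lemma3.4} produces $M_{\alpha_1}^{a}$ (at the \emph{original} locality $a$) and $M_{\alpha_2}^{\tilde b}$ for a $\tilde b>a$ of your choosing; you cannot put both factors at the enlarged parameter $b'$ from the hypothesis. More importantly, applying Theorem~\ref{GM_2001_thm3.1} to $M_{\alpha_2}^{\tilde b}$ requires $(u,v)\in\mathscr{M}_{p,q,\alpha_2}^{6\tilde b+9\sqrt{d}\tilde b^2}$, so if you took $\tilde b=b'$ you would need the testing class at parameter $6b'+9\sqrt{d}b'^2$, which the hypothesis does not supply. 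The fix, as in the paper, is to choose an intermediate $\tilde a>a$ with $6\tilde a+9\sqrt{d}\tilde a^2<b'$ (possible precisely because $b'>6a+9\sqrt{d}a^2$) and run the Welland inequality with $\tilde b=\tilde a$.

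Second, your claim that both orders $\beta_1,\beta_2$ lie in $(0,\alpha)$ is off: the splitting uses $\alpha_{1,2}=\alpha\mp\varepsilon$, so one order strictly exceeds $\alpha$, while the hypothesis only provides a class at some $\beta'<\alpha$. The missing ingredient is the monotonicity $\mathscr{M}_{p,q,\alpha_1}^{b}\subset\mathscr{M}_{p,q,\alpha_2}^{b}$ for $\alpha_1<\alpha_2$, which follows from $\gamma(Q)<1$; writing $\beta'=\alpha-\varepsilon$ (after shrinking $\varepsilon$ so that $0<\varepsilon<\min\{\alpha,d-\alpha\}$), this inclusion upgrades the single hypothesis class to the class at order $\alpha+\varepsilon$ needed for the second factor, and restriction of the locality parameter handles the first. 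Finally, no exponent relation of the form $\theta/q_1+(1-\theta)/q_2=1/q$ with distinct $q_i$ needs to be solved: the symmetric choice $\theta=1/2$, $q_1=q_2=2q$ reduces each factor to $\|M f\|_{L^{q}(\bR^d,u,\gamma)}^{1/2}$ with the \emph{same} $q$ and the same target estimate from Theorem~\ref{GM_2001_thm3.1}. With these corrections your argument coincides with the paper's proof.
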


The highlights of the paper are as follows. In Section \ref{DHY_2101_preliminaries}, we give some basic facts used in the proofs of the desired main results, as well as the radial versions of $M_{\alpha}^a$ and $A_{p,q,\alpha}^a$. In Section \ref{DHY_2101_TLMTW}, we investigate some properties for the local Muckenhoupt type weights and prove the two-weight weak-type boundedness of $M_{\alpha}^a$ and $\tilde{I}_{\alpha}^a$. Meanwhile a natural question (see Question \ref{DHY_2101_question1} below) arises here. In Section \ref{DHY_2101_TLSTW}, we study the similar properties for the local Sawyer type weights and prove the two-weight strong-type boundedness of $M_{\alpha}^a$ and $\tilde{I}_{\alpha}^a$. Furthermore, a similar question (see Question \ref{DHY_2101_question2} below) is stated at the end of Section \ref{DHY_2101_TLSTW}.

We end this section with some notions and notations. Hereafter, we will be working in $\bR^d$ and $d$ will always denote the dimension. We will denote by $C$ or its variants a positive constant independent of the main involved parameters, and use $f\lesssim g$ to denote $f\leq C g$; particularly, if $f\lesssim g\lesssim f$, then we will write $f\sim g$. If necessary, we will denote the dependence of the constants parenthetically, e.g., $C=C(a,d)$ or $C=C_{a,d}$. Similarly, $f\lesssim_{a,d} g$ will denote $f\leq C_{a,d} g$ and $f\sim_{a,d}g$ will denote $C_{a,d} f\leq g\leq C_{a,d} f$. By a weight we will always mean a locally integrable function and non-negative almost everywhere on $\bR^d$ (with respect to the associated measure $\gamma$). For a given weight $\omega$, the weighted Gaussian Lebesgue norms on $\bR^d$ will be denoted by
\[\|f\|_{L^p(\bR^d, \omega, \gamma)}:=\l(\int_{\bR^d} |f(x)|^p\omega(x)\ddd\gamma(x)\r)^{\frac{1}{p}},\]
 and
  \[\|f\|_{L^{p,\infty}(\bR^d, \omega, \gamma)}:=\sup_{\lambda>0}\frac{1}{\lambda}\l(\int_{\{x\in\bR^d: f(x)>\lambda\}} \omega(x)\ddd\gamma(x)\r)^{\frac{1}{p}}.\]

\section{Preliminaries}\label{DHY_2101_preliminaries}
It is easy to see that the probability measure $\gamma$ is highly concentrated around the origin with exponential decay at infinity. Thereby it is not a doubling measure, i.e., there is no constant $C>0$, independent of $x\in\bR^d$ and $r>0$, such that
\[\gamma\l(B(x,2r)\r) \leq C \gamma\l(B(x,x)\r)\]
holds for all $x\in\bR^d$ and $r>0$. Here $\gamma(B):=\int_B \ddd\gamma(x)$. See~\cite[Appendix 10.3]{GHA_2019} for more details. Hence we know that the Gaussian measure space is not a homogeneous type space in the sense of Coifman and Weiss~\cite{CW_1977}. However, as we have mentioned in Section \ref{DHY_2101_introduction}, if we define the family of admissible cubes $\mathscr{Q}_a$, then \cite[Proposition 2.1]{MM_2007} points out that
\begin{equation}\label{DHY_2101_equivalent}
e^{|c_Q|^2}\sim_{a}e^{|x|^2}
\end{equation}
holds for all $Q\in \mathscr{Q}_a$ and all $x\in Q$. From this estimate we conclude that
\begin{equation}\label{DHY_2101_equivalent_2}
\gamma(Q)=\int_Q \ddd\gamma(x)\sim_{a,d} \;e^{-|c_Q|^2}\int_Q \ddd x =e^{-|c_Q|^2}\ell(Q)^d
\end{equation}
holds for all $Q\in \mathscr{Q}_a$. Furthermore the Gaussian measure is doubling and reverse doubling if we restrict it to $\mathscr{Q}_a$. In other words, there exist constants $C_1=C_{a,d}\geq 1$ and $C_2=C'_{a,d}>1$ such that for all $Q\in \mathscr{Q}_a$ we have
\[\gamma(2Q)\leq C_1 \gamma(Q), \quad \gamma(2Q)\geq C_2\gamma(Q).\]
Hence we say $\gamma$ satisfies the locally doubling condition and locally reverse doubling condition on $\mathscr{Q}_a$.

On the other hand, the Gaussian measure is trivially a $d$-dimensional measure in $\bR^d$, i.e.,
\[\gamma(Q)\leq \ell(Q)^d\]
holds for all cubes $Q$ in $\bR^d$. Therefore some results on the $d$-dimensional measure, such as~\cite{Tolsa_2001} and~\cite{GM_2001}, may be useful in Gaussian harmonic analysis.

By using the estimates (\ref{DHY_2101_equivalent}) and (\ref{DHY_2101_equivalent_2}), we conclude that
\begin{align*}
\frac{1}{[\gamma(Q)]^{1-\dot{\alpha}}}\int_Q |f(y)|\ddd \gamma(y)
&\sim_{a,d} e^{-|c_Q|^2}\frac{1}{[\gamma(Q)]^{1-\dot{\alpha}}}\int_Q |f(y)|\ddd y \\
&\sim_{a,d} e^{-\dot{\alpha}|c_Q|^2}\frac{1}{\ell(Q)^{d-\alpha}}\int_Q |f(y)|\ddd y \\
&\sim_{a,d} \frac{1}{\ell(Q)^{d-\alpha}} \int_Q |f(y)| e^{-\dot{\alpha}|y|^2} \ddd y
\end{align*}
holds for all $Q\in\mathscr{Q}_a$. Now we can give the pointwise equivalent radial version of $M_{\alpha}^a$ as follows.
\begin{definition}
Let $a\in(0,\infty)$, $\dot{\alpha}\in [0,1)$, $\alpha=\dot{\alpha}d$ and $f\in L_{loc}^{1}(\gamma)$. We define the \textit{local fractional maximal operator $M^{a}_{\alpha}$ on Gaussian measure spaces} by setting
\begin{align*}
M^{a}_{\alpha}(f)(x)&:=\sup_{Q\in\mathscr{Q}_a(x)}\frac{1}{[\gamma(Q)]^{1-\dot{\alpha}}}\int_Q |f(y)|\ddd \gamma(y) \\
&:\sim_{a,d} \sup_{Q\in\mathscr{Q}_a(x)} \frac{1}{\ell(Q)^{d-\alpha}} \int_Q |f(y)| \ddd \gamma'(y),
\end{align*}
where $\ddd \gamma'(y)=e^{-\dot{\alpha}|y|^2}\ddd y=e^{-\alpha|y|^2/d}\ddd y$.
\end{definition}

To use the dyadic analysis on Gaussian measure spaces more conveniently and adapt the $A_{p,q,\alpha}^a$ condition to our radialization method, we introduce the following $\mathscr{A}_{p,q,\alpha}^a$ condition.
\begin{definition}
Given $0<a<\infty$, $0\leq\dot{\alpha}<1$, $\alpha=\dot{\alpha}d$ and $1< p\leq q<\infty$, we say that a pair of weights $(u,v)\in\mathscr{A}_{p,q,\alpha}^a$ if
\begin{align*}
[u,v]_{\mathscr{A}_{p,q,\alpha}^a}=\sup_{Q\in\mathscr{Q}_a} \frac{1}{\ell(Q)^{d-\alpha}}\l(\int_Q  u(x) \ddd  \gamma'(x)\r)^{1/q}\l(\int_Q v(x)^{1-p'}\ddd\gamma'(x)\r)^{1/{p'}}<\infty.
\end{align*}
In the case $p=1$, we say $(u,v)\in \mathscr{A}_{1,q,\alpha}^a$ if there exists a constant $C$ such that for every cube $Q\in\mathscr{Q}_a$ the inequality
\[\frac{1}{\ell(Q)^{d-\alpha}} \l(\int_Q  u(x) \ddd  \gamma'(x)\r)^{1/q} \leq Cv(x)\]
holds for $\gamma'$-a.e. $x\in Q$.
\end{definition}

We can show the following close relation between the $A_{p,q,\alpha}^a$ condition and the $\mathscr{A}_{p,q,\alpha}^a$ condition.
\begin{prop}\label{DHY_RBAA}
Let
\[u'(x):=u(x)e^{-(1-\dot{\alpha})|x|^2},\quad v'(x):=v(x)e^{-(1-\dot{\alpha})|x|^2}.\]
Then
\[(u,v)\in A_{p,q,\alpha}^a\Leftrightarrow (u',v')\in \mathscr{A}_{p,q,\alpha}^a.\]
\end{prop}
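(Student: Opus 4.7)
The plan is to reduce both weighted conditions to equivalent expressions involving the Lebesgue integrals of $u$ and $v^{1-p'}$ over $Q$, together with the factors $\ell(Q)$ and $e^{|c_Q|^2}$, and then match them term by term. The two key tools are the estimates (\ref{DHY_2101_equivalent}) and (\ref{DHY_2101_equivalent_2}), which let us (i) pull any factor $e^{c|x|^2}$ out of an integral over $Q\in\mathscr{Q}_a$ at the cost of a constant depending only on $a,c$, and (ii) replace $\gamma(Q)^{\dot{\alpha}-1}$ by $e^{(1-\dot\alpha)|c_Q|^2}\ell(Q)^{\alpha-d}$ up to a constant depending only on $a,d$.

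First I would treat the $u$-integral. Since $u'(x)e^{-\dot{\alpha}|x|^2}=u(x)e^{-|x|^2}$, an exact identity $\int_Q u\,\ddd\gamma = \pi^{-d/2}\int_Q u'\,\ddd\gamma'$ holds, with no use of the locally doubling property. Next I would treat the $v^{1-p'}$-integral. Writing $v'(x)^{1-p'}=v(x)^{1-p'}e^{-(1-\dot\alpha)(1-p')|x|^2}$ and multiplying by $\ddd\gamma'(x)=e^{-\dot\alpha|x|^2}\ddd x$, the combined exponential factor is $e^{[-(1-\dot\alpha)(1-p')-\dot\alpha]|x|^2}$; comparing with $v(x)^{1-p'}\ddd\gamma(x)=\pi^{-d/2}v(x)^{1-p'}e^{-|x|^2}\ddd x$, the ratio of exponents (after the $e^{-|x|^2}$ from $\gamma$ is accounted for) is precisely $(1-\dot\alpha)p'|x|^2$. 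Invoking (\ref{DHY_2101_equivalent}) on $Q\in\mathscr{Q}_a$ yields
\[
\int_Q v'(x)^{1-p'}\,\ddd\gamma'(x) \sim_{a,d} e^{(1-\dot\alpha)p'|c_Q|^2}\int_Q v(x)^{1-p'}\,\ddd\gamma(x).
\]

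Now I would plug these two identities into the definition of $[u',v']_{\mathscr{A}_{p,q,\alpha}^a}$. After raising to powers $1/q$ and $1/p'$ respectively and collecting the exponential weights, the factor in front of the two Lebesgue integrals becomes a constant multiple of $\ell(Q)^{\alpha-d}\,e^{(1-\dot\alpha)|c_Q|^2}$. On the other side, starting from $[u,v]_{A_{p,q,\alpha}^a}$, the prefactor $\gamma(Q)^{\dot\alpha+1/q-1/p}\gamma(Q)^{-1/q}\gamma(Q)^{-1/p'}=\gamma(Q)^{\dot\alpha-1}$ can be converted via (\ref{DHY_2101_equivalent_2}) to $\ell(Q)^{\alpha-d}e^{(1-\dot\alpha)|c_Q|^2}$ up to a constant. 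Thus both suprema are equivalent, giving the claimed biconditional for $1<p\leq q<\infty$.

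For the endpoint $p=1$, the same substitutions apply, but now the $v^{1-p'}$ factor is absent. I would estimate the essential supremum directly: on $Q\in\mathscr{Q}_a$ we have $v(x)=v'(x)e^{(1-\dot\alpha)|x|^2}\sim_a v'(x)e^{(1-\dot\alpha)|c_Q|^2}$, and the prefactor $\gamma(Q)^{\dot\alpha+1/q-1}(\gamma(Q))^{-1/q}=\gamma(Q)^{\dot\alpha-1}$ again converts to $\ell(Q)^{\alpha-d}e^{(1-\dot\alpha)|c_Q|^2}$; the exponential factors on both sides cancel, reducing the $A_{1,q,\alpha}^a$ bound to the $\mathscr{A}_{1,q,\alpha}^a$ bound, with essentially identical null sets since $\gamma$ and $\gamma'$ are mutually absolutely continuous with respect to Lebesgue measure.

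There is no real obstacle; the proof is a matter of careful exponent arithmetic. The only point deserving attention is verifying that the exponent $-(1-\dot\alpha)(1-p')-\dot\alpha+1=(1-\dot\alpha)p'$ is correct, so that after raising to the $1/p'$ power the resulting $e^{(1-\dot\alpha)|c_Q|^2}$ matches exactly the factor produced from $\gamma(Q)^{\dot\alpha-1}$ on the $A_{p,q,\alpha}^a$ side; this cancellation is what forces the specific definition $u'=ue^{-(1-\dot\alpha)|\cdot|^2}$, $v'=ve^{-(1-\dot\alpha)|\cdot|^2}$ adopted in the statement.
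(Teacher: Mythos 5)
Your proposal is correct and follows essentially the same route as the paper: both reduce the two conditions, via the local equivalence $e^{-|x|^2}\sim_a e^{-|c_Q|^2}$ on $Q\in\mathscr{Q}_a$ and the estimate $\gamma(Q)\sim_{a,d}e^{-|c_Q|^2}\ell(Q)^d$, to the same expression $\ell(Q)^{\alpha-d}e^{(1-\dot\alpha)|c_Q|^2}$ times the (Lebesgue or Gaussian) integrals of $u$ and $v^{1-p'}$, and your exponent computation $-(1-\dot\alpha)(1-p')-\dot\alpha+1=(1-\dot\alpha)p'$ checks out. The only cosmetic difference is that you observe the $u$-substitution is an exact identity and you spell out the $p=1$ endpoint, which the paper dismisses as "essentially the same."
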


\begin{proof}
We focus on proving the case $p>1$ since the case $p=1$ is essentially the same. For every fixed $Q\in\mathscr{Q}_a$, we have known that
\begin{equation}\label{DHY_LDP}
e^{-|x|^2}\sim_a e^{|y|^2}
\end{equation}
holds for any $x\in Q$ and any $y\in Q$. Hence we conclude
\begin{align*}
&\gamma(Q)^{(\dot{\alpha}+1/q-1/p)}\l(\frac{1}{\gamma(Q)} \int_Q u(x) \ddd \gamma(x)\r)^{1/q}\l(\frac{1}{\gamma(Q)} \int_Q v(x)^{1-p'}\ddd\gamma(x)\r)^{1/{p'}} \\
&\sim_{a,d} \ell(Q)^{\alpha+d/q-d/p} e^{-|c_Q|^2(\dot{\alpha}+1/q-1/p)} \l(\frac{e^{-|c_Q|^2}}{\ell(Q)^d e^{-|c_Q|^2}} \int_Q u(x) \ddd x\r)^{1/q}\l(\frac{e^{-|c_Q|^2}}{\ell(Q)^d e^{-|c_Q|^2}} \int_Q v(x)^{1-p'}\ddd x\r)^{1/{p'}} \\
&=\ell(Q)^{\alpha+d/q-d/p} e^{-|c_Q|^2(\dot{\alpha}+1/q-1/p)} \l(\frac{1}{\ell(Q)^d}\int_Q u(x)\ddd x\r)^{1/q} \l(\frac{1}{\ell(Q)^d}\int_Q v(x)^{1-p'}\ddd x\r)^{1/{p'}}.
\end{align*}
On the other hand, using the estimate (\ref{DHY_LDP}) again we have
\begin{align*}
&\frac{1}{\ell(Q)^{d-\alpha}}\l(\int_Q u(x)e^{-(1-\dot{\alpha})|x|^2}\ddd\gamma'(x)\r)^{1/q}\l(\int_Q v(x)^{1-p'}e^{-(1-\dot{\alpha})|x|^2(1-p')}\ddd\gamma'(x)\r)^{1/{p'}} \\
&=\frac{1}{\ell(Q)^{d-\alpha}} \l(\int_Q u(x)\ddd x\r)^{1/q} e^{-|c_Q|^2/q}\l(\int_Q v(x)^{1-p'}\ddd x\r)^{1/{p'}} e^{-|c_Q|^2(\dot{\alpha}-1/p)} \\
&=\ell(Q)^{\alpha+d/q-d/p} e^{-|c_Q|^2(\dot{\alpha}+1/q-1/p)} \l(\frac{1}{\ell(Q)^d}\int_Q u(x)\ddd x\r)^{1/q} \l(\frac{1}{\ell(Q)^d}\int_Q v(x)^{1-p'}\ddd x\r)^{1/{p'}}.
\end{align*}
These two facts yield the desired result.
\end{proof}

\section{The local Muckenhoupt type weights}\label{DHY_2101_TLMTW}
Due to the Proposition \ref{DHY_RBAA} above, we investigate some properties of the $\mathscr{A}_{p,q,\alpha}^a$ condition. In the one-weight case , Wang~et~al.~\cite{WZL_2016} point out that the local Moukenhoupt weights $A_{p}^a$ on Gaussian measure spaces have the property $A_{p}^a=A_{p}^b$ and then $A_{p,q}^a=A_{p,q}^b$; but in the two-weight case, the similar result $\mathscr{A}_{p,q,\alpha}^a=\mathscr{A}_{p,q,\alpha}^b$ is not always true. To see this fact, we need the following example first.
\begin{example}\label{WZL_2016_prop3.2}
Let $0<a<b<\infty$, $n\in \bZ_{+}$, $u(x)$ and $v(x)$ be even functions on $\bR^1$. When $x\in\bR^{+}$,
\[u(x)=\l\{\begin{array}{ll}
1, & x\in (0,1), \\
n^q, & x\in \l(1+(n-1)a+(n-1)b, 1+(n-1)a+(n-1)b+\frac{b-a}{2}\r), \\
1/{n^q}, & x\in \l(1+(n-1)a+(n-1)b+\frac{b-a}{2}, 1+na+nb\r),
\end{array}\r.\]
with $n\geq1$ and
\[v(x)=\l\{\begin{array}{ll}
1, & x\in(0,1+b), \\
n^p, & x\in \l(1+(n-2)a+(n-1)b, 1+(n-1)a+nb-\frac{b-a}{2}\r),\\
1/{n^p}, & x\in \l(1+(n-1)a+nb-\frac{b-a}{2}, 1+(n-1)a+nb\r),
\end{array}\r.\]
with $n\geq2$. Define
\[[u,v]_{\mathfrak{A}_{p,q,\alpha}^a}=\sup_{Q\in \mathscr{Q}'_a}\frac{1}{\ell(Q)^{1-\alpha}}\l(\int_Q u(x)\ddd x\r)^{1/q} \l(\int_Q v(x)^{1-p'} \ddd x\r)^{1/{p'}},\]
where $\mathscr{Q}'_a=\{Q\subset \bR^d: \ell(Q)\leq a\}$.
If $1/q-1/p+\alpha\geq0$, then
\[[u,v]_{\mathfrak{A}_{p,q,\alpha}^a}<\infty, \quad [u,v]_{\mathfrak{A}_{p,q,\alpha}^b}=\infty.\]
\end{example}

\begin{proof}
We only need to consider all the cubes $Q\in\mathscr{Q}'_{a}$ on $\bR^{+}$ due to the symmetry. Setting $\sigma(x)=v(x)^{1-p'}$, we can draw the function graphs of $u(x)^{1/q}$ and $\sigma(x)^{1/p}$ on $\bR^{+}$.

\begin{figure}[htbp]\centering
\definecolor{zzttff}{rgb}{0.6,0.2,1}
\definecolor{ttffqq}{rgb}{0.2,1,0}
\definecolor{ffffff}{rgb}{1,1,1}
\definecolor{qqffqq}{rgb}{0,1,0}
\definecolor{ffqqqq}{rgb}{1,0,0}
\definecolor{xdxdff}{rgb}{0.49019607843137253,0.49019607843137253,1}
\begin{tikzpicture}[line cap=round,line join=round,>=triangle 45,x=1cm,y=1cm]
\begin{axis}[
x=1cm,y=1cm,
axis lines=middle,
ymajorgrids=true,
xmajorgrids=false,
xmin=-0.38872411953161656,
xmax=10.88528457209615,
ymin=-0.3899465493386252,
ymax=4.817601473758484,
xtick={0,1},
ytick={0,1/2,1,2,3,4},]
\clip(-0.38872411953161656,-1.1799465493386252) rectangle (10.88528457209615,4.817601473758484);
\draw [line width=2pt,color=ffqqqq] (1,0)-- (3,0);
\draw [line width=2pt,color=ffqqqq] (4,0)-- (6,0);
\draw [line width=2pt,color=ffqqqq] (7,0)-- (9,0);
\draw [line width=2.4pt] (1.5,0)-- (2.5,0);
\draw [line width=2.4pt] (4.5,0)-- (5.5,0);
\draw [line width=2.4pt] (7.5,0)-- (8.5,0);
\draw [line width=1.2pt,color=qqffqq] (0,1)-- (3,1);
\draw [line width=1.2pt,color=qqffqq] (3,0.5)-- (5.5,0.5);
\draw [line width=1.2pt,color=qqffqq] (6,0.33333)-- (8.5,0.3333);
\draw [line width=1.2pt,color=qqffqq] (5.5,2)-- (6,2);
\draw [line width=1.2pt,color=qqffqq] (8.5,3)-- (9,3);
\draw [line width=1.2pt,color=qqffqq] (9,0.25)-- (11.5,0.25);
\draw [line width=1.2pt,dotted,color=zzttff] (0,1)-- (4,1);
\draw [line width=1.2pt,dotted,color=zzttff] (4.5,0.5)-- (7,0.5);
\draw [line width=1.2pt,dotted,color=zzttff] (7.5,0.3333)-- (11.5,0.33333);
\draw [line width=1.2pt,dotted,color=zzttff] (7,3)-- (7.5,3);
\draw [line width=1.2pt,dotted,color=zzttff] (4,2)-- (4.5,2);
\begin{scriptsize}
\draw [fill=xdxdff] (0,0) circle (1.5pt);
\draw (0.1862061345801013,-.1957683491553985) node {$O$};
\draw [fill=ffqqqq] (1,0) circle (1.5pt);
\draw[color=ffqqqq] (1.0862061345801013,0.1557683491553985) node {$B_1$};
\draw [fill=ffqqqq] (3,0) circle (1.5pt);
\draw[color=ffqqqq] (3.1104726568048573,-0.1957683491553985) node {$B'_1$};
\draw [fill=ffqqqq] (4,0) circle (1.5pt);
\draw[color=ffqqqq] (4.081217568949814,-0.1957683491553985) node {$B_2$};
\draw [fill=ffqqqq] (6,0) circle (1.5pt);
\draw[color=ffqqqq] (6.113009245532282,-0.1957683491553985) node {$B'_{2}$};
\draw [fill=ffqqqq] (7,0) circle (1.5pt);
\draw[color=ffqqqq] (7.08375415767724,-0.1957683491553985) node {$B_3$};
\draw [fill=ffqqqq] (9,0) circle (1.5pt);
\draw[color=ffqqqq] (9.115545834259708,-0.1957683491553985) node {$B'_3$};
\draw [fill=black] (1.5,0) circle (1.5pt);
\draw[color=black] (1.4828663221891492,-0.1957683491553985) node {$A_1$};
\draw [fill=black] (2.5,0) circle (1.5pt);
\draw[color=black] (2.5138124691958096,-0.1957683491553985) node {$A'_1$};
\draw [fill=black] (4.5,0) circle (1.5pt);
\draw[color=black] (4.485402910916574,-0.1957683491553985) node {$A_2$};
\draw [fill=black] (5.5,0) circle (1.5pt);
\draw[color=black] (5.516349057923235,-0.1957683491553985) node {$A'_2$};
\draw [fill=black] (7.5,0) circle (1.5pt);
\draw[color=black] (7.480414345286288,-0.1957683491553985) node {$A_3$};
\draw [fill=black] (8.5,0) circle (1.5pt);
\draw[color=black] (8.511360492292947,-0.1957683491553985) node {$A'_3$};
\draw [fill=qqffqq] (0,1) circle (1pt);
\draw [fill=ffffff] (3,1) circle (1pt);
\draw [fill=qqffqq] (3,0.5) circle (1pt);
\draw [fill=qqffqq] (5.5,0.5) circle (1pt);
\draw [fill=ttffqq] (6,0.33333) circle (1pt);
\draw [fill=qqffqq] (8.5,0.3333) circle (1pt);
\draw [fill=qqffqq] (9,0.25) circle (1pt);
\draw [fill=ffffff] (5.5,2) circle (1pt);
\draw [fill=ffffff] (4,1) circle (1pt);
\draw [fill=ffffff] (4.5,0.5) circle (1pt);
\draw [fill=ffffff] (7,0.5) circle (1pt);
\draw [fill=ffffff] (7.5,0.33333) circle (1pt);
\draw [fill=ffffff] (6,2) circle (1pt);
\draw [fill=ffffff] (8.5,3) circle (1pt);
\draw [fill=ffffff] (9,3) circle (1pt);
\draw [fill=ffffff] (1,1) circle (1pt);
\draw [fill=ffffff] (1.5,1) circle (1pt);
\draw [fill=ffffff] (2.5,1) circle (1pt);
\draw [fill=ffffff] (4,2) circle (1pt);
\draw [fill=ffffff] (6,0.5) circle (1pt);
\draw [fill=ffffff] (9,0.33333) circle (1pt);
\draw [fill=ffffff] (7,3) circle (1pt);
\draw [fill=ffffff] (7.5,3) circle (1pt);
\draw [fill=ffffff] (4.5,2) circle (1pt);
\end{scriptsize}
\end{axis}
\end{tikzpicture}
\caption{The fuctions $u(x)^{1/q}$ and $\sigma(x)^{1/p}$ on $\bR^+$}
\label{figure_1}
\end{figure}
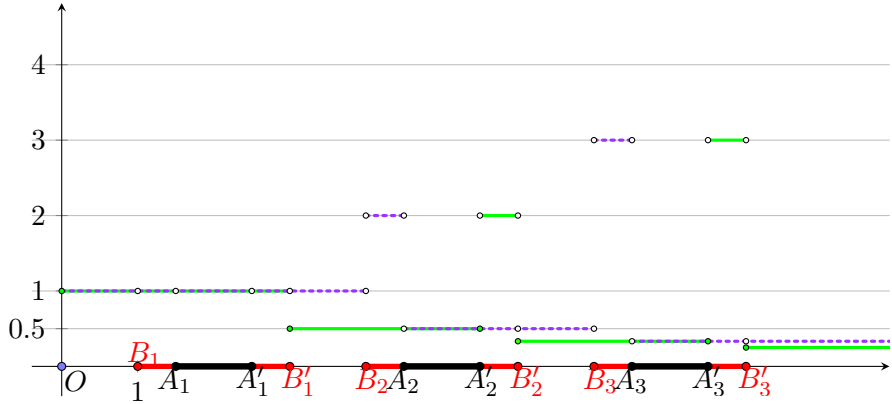

As shown in \autoref{figure_1}, the green full line segments are the function graph of $\sigma(x)^{1/p}$ and the purple dotted line segments are the function graph of $u(x)^{1/q}$. The lengths of these intervals satisfy the following
\[(B_i,B'_i)=b,\quad(A_i,A'_i)=a,\quad(B_i,A_i)=(A'_i,B'_i)=\frac{b-a}{2}, \quad (B'_i,B_{i+1})=a\]
for every $i\in\bZ_{+}$. Based on this \autoref{figure_1}, we shall prove the desired results more intuitively. Set
\[Q_{n_1}=(B_n,A_n)=\l(1+(n-1)a+(n-1)b, 1+(n-1)a+(n-1)b+\frac{b-a}{2}\r),\]
\[Q_{n_2}=(A'_n,B'_n)=\l(1+(n-1)a+nb-\frac{b-a}{2}, 1+(n-1)a+nb\r)\]
and
\[Q_n=(B_n,B'_n)=\Big(1+(n-1)a+(n-1)b, 1+(n-1)a+nb\Big).\]
For fixed $Q\in\mathscr{Q}'_a$, since $(A_i,A'_i)=(B'_i,B_{i+1})=a$ and $\ell(Q)\leq a$, the open interval $Q$ cannot intersect with more than one of these intervals $Q_{n_1}$ and $Q_{n_2}$. Therefore, when $Q\cap Q_{n_1}\neq\emptyset$ we obtain
\begin{align*}
\frac{1}{\ell(Q)^{1-\alpha}}\l(\int_Q u(x)\ddd x\r)^{1/q} \l(\int_Q v(x)^{1-p'} \ddd x\r)^{1/{p'}}
&\leq \frac{1}{\ell(Q)^{1-\alpha}}\l(\int_Q n^q\ddd x\r)^{1/q} \l(\int_Q \frac{1}{n^{p'}} \ddd x\r)^{1/{p'}} \\
&= \frac{1}{\ell(Q)^{1-\alpha}} \ell(Q)^{1/q+1/{p'}} \\
&= \ell(Q)^{1/q-1/p+\alpha} \leq a^{1/q-1/p+\alpha};
\end{align*}
and when $Q\cap Q_{n_2}\neq\emptyset$ we also obtain
\begin{align*}
\frac{1}{\ell(Q)^{1-\alpha}}\l(\int_Q u(x)\ddd x\r)^{1/q} \l(\int_Q v(x)^{1-p'} \ddd x\r)^{1/{p'}}
&\leq \frac{1}{\ell(Q)^{1-\alpha}}\l(\int_Q \frac{1}{n^q} \ddd x\r)^{1/q} \l(\int_Q n^{p'} \ddd x\r)^{1/{p'}} \\
&= \frac{1}{\ell(Q)^{1-\alpha}} \ell(Q)^{1/q+1/{p'}} \\
&= \ell(Q)^{1/q-1/p+\alpha} \leq a^{1/q-1/p+\alpha},
\end{align*}
where we have used the assumption that $1/q-1/p+\alpha\geq 0$. Hence we have proved the result
\[[u,v]_{\mathfrak{A}_{p,q,\alpha}^a}\leq a^{1/q-1/p+\alpha}<\infty.\]
On the other hand, by chosing $Q=Q_n\in \mathscr{Q}'_b$, we conclude that
\begin{align*}
[u,v]_{\mathfrak{A}_{p,q,\alpha}^b}&=\sup_{Q\in \mathscr{Q}'_b}\frac{1}{\ell(Q)^{d-\alpha}}\l(\int_Q u(x)\ddd x\r)^{1/q} \l(\int_Q v(x)^{1-p'} \ddd x\r)^{1/{p'}} \\
&\geq \frac{1}{\ell(Q_n)^{1-\alpha}}\l(\int_{Q_{n_1}} u(x)\ddd x\r)^{1/q} \l(\int_{Q_{n_2}} v(x)^{1-p'} \ddd x\r)^{1/{p'}} \\
&=\frac{1}{\ell(Q_n)^{1-\alpha}} \cdot n^2 \cdot \ell(Q_{n_1})^{d/q} \cdot \ell(Q_{n_2})^{d/{p'}} \\
&=n^2\frac{\l(\frac{b-a}{2}\r)^{1/q+1/{p'}}}{b^{d-\alpha}}.
\end{align*}
Letting $n\to\infty$, we get the desired result $[u,v]_{\mathfrak{A}_{p,q,\alpha}^b}=\infty$.  This completes the proof.
\end{proof}

\begin{prop}\label{WZL_2016_prop3.2_2}
Let $0<a<b<\infty$, $0\leq\alpha<d$ and $1<p\leq q<\infty$. Then
\[\mathscr{A}_{p,q,\alpha}^b\subsetneqq\mathscr{A}_{p,q,\alpha}^a.\]
\end{prop}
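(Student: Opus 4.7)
The inclusion $\mathscr{A}_{p,q,\alpha}^b \subseteq \mathscr{A}_{p,q,\alpha}^a$ is immediate: since $0<a<b$, the defining condition $\ell(Q)\leq a\,m(c_Q)$ of $\mathscr{Q}_a$ is strictly stronger than the analogous condition for $\mathscr{Q}_b$, so $\mathscr{Q}_a\subseteq\mathscr{Q}_b$. Taking the supremum over a smaller family then gives $[u,v]_{\mathscr{A}_{p,q,\alpha}^a}\leq [u,v]_{\mathscr{A}_{p,q,\alpha}^b}$, so this direction is clear.

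For strict inclusion the plan is to exhibit a pair $(u,v)$ with $[u,v]_{\mathscr{A}_{p,q,\alpha}^a}<\infty$ and $[u,v]_{\mathscr{A}_{p,q,\alpha}^b}=\infty$. Example \ref{WZL_2016_prop3.2} suggests the correct mechanism, but cannot be applied verbatim: its bad cubes $Q_n$ have fixed side length $b$ with centers $|c_n|\to\infty$, hence drop out of $\mathscr{Q}_b$ as soon as $|c_n|>1$. The idea is therefore to replicate the bump-pair construction at a shrinking scale compatible with admissibility. Along a coordinate axis in $\bR^d$, for each $n\in\bN$ one would place a cube $Q_n^*:=Q(c_n, b/n)$ with $|c_n|=n$, so that $Q_n^*\in\mathscr{Q}_b\setminus\mathscr{Q}_a$. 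Inside $Q_n^*$, one places two small subcubes $I_n,J_n$ of side length $(b-a)/(2n)$ at opposite corners, separated by distance exceeding $a/n$, and defines
\[u(x):=W_n\chi_{I_n}(x)+u_0(x),\qquad v(x)^{1-p'}:=W_n\chi_{J_n}(x)+v_0(x),\]
with $u_0,v_0$ bounded positive defaults and weights $W_n:=n^k e^{\dot\alpha n^2}$ for a parameter $k>0$ to be chosen.

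A direct computation using $\ddd\gamma'(x)=e^{-\dot\alpha|x|^2}\ddd x$ and $e^{-\dot\alpha|x|^2}\sim e^{-\dot\alpha n^2}$ on $Q_n^*$ should give
\[[u,v]_{\mathscr{A}_{p,q,\alpha}^{b}}\geq [u,v]_{Q_n^*}\sim n^{k(1/q+1/p')-(d/q-d/p+\alpha)},\]
which diverges for $k$ large, yielding $[u,v]_{\mathscr{A}_{p,q,\alpha}^b}=\infty$. Meanwhile, any $Q'\in\mathscr{Q}_a$ near $c_n$ satisfies $\ell(Q')\leq a/n$ and hence meets at most one of $I_n,J_n$. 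The large weight therefore enters only one integral, so the residual factor from the Gaussian measure is $e^{-\dot\alpha n^2/p'}$ or $e^{-\dot\alpha n^2/q}$, which more than compensates the polynomial gain $W_n^{1/q}$ (or $W_n^{1/p'}$); hence $[u,v]_{Q'}$ should stay uniformly bounded in $n$. The chief obstacle is precisely this last balancing step: one has to verify that, in the good-cube estimate, $W_n$ appears only with exponent $1/q$ or $1/p'$ (never both), so the remaining polynomial factor $\ell(Q')^{\alpha+1/q-1/p}$ stays controlled under the admissibility condition $\alpha+d/q-d/p\geq 0$, which is the regime where $\mathscr{A}_{p,q,\alpha}^a$ is nonempty (cf.~Remark \ref{DHY_1}).
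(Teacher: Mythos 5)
Your easy inclusion and your overall strategy --- rescale the bump-pair mechanism of Example \ref{WZL_2016_prop3.2} so that the carrier cubes have side $\sim b/|c_n|$ and the two bumps are separated by a gap of width $\sim a/|c_n|$ that no $\mathscr{Q}_a$-cube can bridge --- are essentially the paper's, and your divergence computation for $[u,v]_{\mathscr{A}_{p,q,\alpha}^{b}}$ is correct. But the step you yourself flag as the chief obstacle, the good-cube balancing, genuinely fails for your choice of weights when $\alpha=0$. There $\dot\alpha=0$, so $\ddd\gamma'=\ddd x$ and $W_n=n^k$ with no Gaussian factor anywhere; for an admissible $Q'\in\mathscr{Q}_a$ with $\ell(Q')\sim a/n$ meeting $I_n$, the two factors give at least $n^{k/q}\,|Q'\cap I_n|^{1/q}\cdot|Q'|^{1/p'}$ against $\ell(Q')^{d}$, i.e.\ a quantity $\sim n^{k/q}\,n^{d/p-d/q}$. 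In the only nonempty regime for $\alpha=0$ (namely $p=q$, forced by $\alpha+d/q-d/p\ge0$ and $p\le q$) this is $n^{k/q}\to\infty$, so $[u,v]_{\mathscr{A}_{p,q,0}^{a}}=\infty$ as well and the two classes are not separated. The compensation you invoke --- $e^{-\dot\alpha n^2/p'}$ beating $W_n^{1/q}$ --- is an exponential-versus-polynomial argument that simply has no exponential when $\dot\alpha=0$, while the proposition is stated for all $0\le\alpha<d$. (For $\alpha>0$ your mechanism does work, since the bounded default integrated against $\ddd\gamma'$ contributes $e^{-\dot\alpha n^2/p'}$, which kills any fixed power $n^{k/q}$.)

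The paper avoids this by a different weight design: instead of one bump sitting on a bounded default, $u$ and $\sigma=v^{1-p'}$ are two-level and anti-correlated, taking values $n^{kq}e^{\dot\alpha|x|^2}$ and $n^{-kp'}e^{\dot\alpha|x|^2}$ on the left half of $Q_n$ and the reciprocal values on the right half. A $\mathscr{Q}_b$-cube pairs the two large halves and gains $n^{2k}$, while on any single admissible cube the product of the two factors is bounded by $n^{k}\cdot n^{-k}=1$ times geometric factors --- a purely algebraic cancellation valid uniformly in $\alpha\in[0,d)$. Replacing your bounded defaults by reciprocally small values of the other weight on the support of each bump would repair your argument; as written it does not cover $\alpha=0$. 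A second, smaller issue: the separation between $I_n$ and $J_n$ is exactly $a/n$, but a cube $Q'\in\mathscr{Q}_a$ centered slightly closer to the origin than $c_n$ has $\ell(Q')\le a/|c_{Q'}|>a/n$, so ``$Q'$ meets at most one of $I_n,J_n$'' is not immediate from $\ell(Q')\le a/n$; the paper proves the analogous claim via the monotonicity of $x\mapsto x+\frac{a}{2x}$, and you would need that (or a slightly widened gap) as well.
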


\begin{proof}
It's trivial that $\mathscr{A}_{p,q,\alpha}^b\subset\mathscr{A}_{p,q,\alpha}^a$ and by Remark \ref{DHY_1} together with Propsition \ref{DHY_RBAA} we only need to concentrate on the situation $1/q-1/p+\alpha\geq0$. In other words, it is enough to find $(u,v)$ belongs to $\mathscr{A}_{p,q,\alpha}^a$ but not in $\mathscr{A}_{p,q,\alpha}^b$ under the assumption $1/q-1/p+\alpha\geq0$. Inspired by Example \ref{WZL_2016_prop3.2}, on $\bR^{+}$, we set
\[u(x)=\l\{\begin{array}{ll}
e^{\dot{\alpha}|x|^2}, & x\in (0,1), \\
n^{kq}e^{\dot{\alpha}|x|^2}, & x\in \l(1+(n-1)a+\sum_{i=1}^{n-1}b_i, 1+(n-1)a+\sum_{i=1}^{n-1}b_i+\frac{b_n-a_n}{2}\r), \\
n^{-kq}e^{\dot{\alpha}|x|^2}, & x\in \l(1+(n-1)a+\sum_{i=1}^{n-1}b_i+\frac{b_n-a_n}{2}, 1+na+\sum_{i=1}^{n}b_i\r),
\end{array}\r.\]
with $n\geq1$ and
\[v(x)=\l\{\begin{array}{ll}
e^{\frac{\dot{\alpha}|x|^2}{1-p'}}, & x\in(0,1), \\
n^{kp}e^{\frac{\dot{\alpha}|x|^2}{1-p'}}, & x\in \l(1+(n-2)a+\sum_{i=1}^{n-1}b_i, 1+(n-1)a+\sum_{i=1}^{n}b_i-\frac{b_n-a_n}{2}\r), \\
n^{-kp}e^{\frac{\dot{\alpha}|x|^2}{1-p'}}, & x\in \l(1+(n-1)a+\sum_{i=1}^{n}b_i-\frac{b_n-a_n}{2}, 1+(n-1)a+\sum_{i=1}^{n}b_i\r),
\end{array}\r.\]
with $n\geq2$ where $a_n, b_n$ satisfy the condition
\begin{equation}\label{condition_1}
\l\{\begin{array}{l}
x_1-\frac{b}{2x_1}=1 \\
b_n=\frac{b}{x_n}>0 \\
1+(n-1)a+\sum_{i=1}^{n-1}b_i=x_n-\frac{b}{2x_n} \\
a_n=\frac{a}{x_n}
\end{array}\r.,
\end{equation}
and the parameter $k$ will be chosen later. The exponent terms in $u(x)$, $v(x)$ are used to offset the measure $\ddd\gamma'(x)$ to Lebesgue measure $\ddd x$ and the condition (\ref{condition_1}) can be shown in \autoref{figure_2}.

\begin{figure}[htbp]\centering
\definecolor{ffqqqq}{rgb}{1,0,0}
\definecolor{xdxdff}{rgb}{0.49019607843137253,0.49019607843137253,1}
\begin{tikzpicture}[line cap=round,line join=round,>=triangle 45,x=1cm,y=1cm]
\begin{axis}[
x=1cm,y=1cm,
axis lines=middle,
xmin=-0.3,
xmax=14.6,
ymin=-1,
ymax=1,
xtick={0,1},
ytick={0},]
\clip(-0.3062670637150298,-4.222722017611691) rectangle (14.653039395116444,4.562132588553177);
\draw [line width=2pt,color=ffqqqq] (1,0)-- (4,0);
\draw [line width=2pt,color=ffqqqq] (7,0)-- (9.4,0);
\draw [line width=2.4pt] (1.6,0)-- (3.4,0);
\draw [line width=2.4pt] (7.4,0)-- (9,0);
\draw [line width=2pt,color=ffqqqq] (12.4,0)-- (14.2,0);
\draw [line width=2.4pt] (12.6,0)-- (14,0);
\begin{scriptsize}
\draw [fill=xdxdff] (0,0) circle (2.5pt);
\draw[color=xdxdff] (0.09053941786831858,0.23583969906796914) node {$O$};
\draw [fill=ffqqqq] (1,0) circle (1.5pt);
\draw[color=ffqqqq] (1.126645230891506,0.22481729680176477) node {$B_1$};
\draw [fill=ffqqqq] (4,0) circle (1.5pt);
\draw[color=ffqqqq] (4.168828256363844,0.22481729680176477) node {$B'_1$};
\draw [fill=ffqqqq] (7,0) circle (1.5pt);
\draw[color=ffqqqq] (7.116754851257398,0.22481729680176477) node {$B_2$};
\draw [fill=ffqqqq] (9.4,0) circle (1.5pt);
\draw[color=ffqqqq] (9.574750556620917,0.22481729680176477) node {$B'_2$};
\draw [fill=black] (1.6,0) circle (1.5pt);
\draw[color=black] (1.6998101487341204,0.22481729680176477) node {$A_1$};
\draw [fill=black] (3.4,0) circle (1.5pt);
\draw[color=black] (3.551573729456413,0.22481729680176477) node {$A'_{1}$};
\draw [fill=ffqqqq] (2.5,0) circle (2.5pt);
\draw[color=ffqqqq] (2.6256919390952667,0.26890690586658217) node {$x_1$};
\draw [fill=black] (7.4,0) circle (1.5pt);
\draw[color=black] (7.52458373510695,0.22481729680176477) node {$A_2$};
\draw [fill=black] (9,0) circle (1.5pt);
\draw[color=black] (9.1,0.22481729680176477) node {$A'_2$};
\draw [fill=ffqqqq] (8.2,0) circle (2.5pt);
\draw[color=ffqqqq] (8.318196698273647,0.26890690586658217) node {$x_2$};
\draw [fill=ffqqqq] (12.4,0) circle (1.5pt);
\draw[color=ffqqqq] (12.35,0.22481729680176477) node {$B_3$};
\draw [fill=ffqqqq] (14.2,0) circle (1.5pt);
\draw[color=ffqqqq] (14.36341832997056,0.22481729680176477) node {$B'_3$};
\draw [fill=black] (12.6,0) circle (1.5pt);
\draw[color=black] (12.721080392306146,0.22481729680176477) node {$A_3$};
\draw [fill=black] (14,0) circle (1.5pt);
\draw[color=black] (13.95,0.22481729680176477) node {$A'_3$};
\draw [fill=ffqqqq] (13.3,0) circle (2.5pt);
\draw[color=ffqqqq] (13.42651413734321,0.26890690586658217) node {$x_3$};
\end{scriptsize}
\end{axis}
\end{tikzpicture}
\caption{The admissible partition of $\bR^+$}
\label{figure_2}
\end{figure}
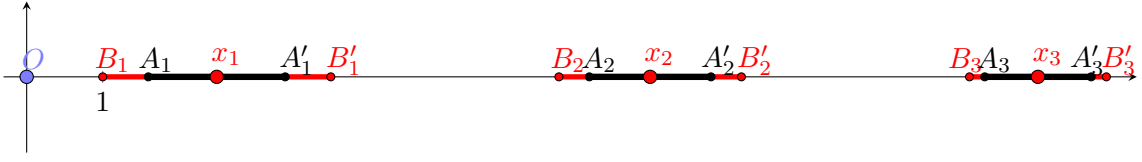

As you can see in \autoref{figure_2}, the red intervals $(B_n,B'_n)$ and the black intervals $(A_n,A'_n)$ have the same centers $x_n$. Furthermore, the lengths of these intervals satisfy the following
\[B'_n-B_n=b/{x_n}=b_n,\quad A'_n-A_n=a/{x_n}=a_n, \quad B_{n+1}-B'_n=a\]
so that $(B_n,B'_n)\in\mathscr{Q}_b$, $(A_n,A'_n)\in\mathscr{Q}_a$ and these intervals $(B_n,B'_n)$ are ``far enough to each other''.

By even extension, we get $u(x)$ and $v(x)$ on $\bR^1$. Similarly set
\[Q_{n_1}=(B_n,A_n)=\l(1+(n-1)a+\sum_{i=1}^{n-1}b_i, 1+(n-1)a+\sum_{i=1}^{n-1}b_i+\frac{b_n-a_n}{2}\r),\]
\[Q_{n_2}=(A'_n,B'_n)=\l(1+(n-1)a+\sum_{i=1}^{n}b_i-\frac{b_n-a_n}{2}, 1+(n-1)a+\sum_{i=1}^{n}b_i\r)\]
and
\[Q_n=(B_n,B'_n)=\l(1+(n-1)a+\sum_{i=1}^{n-1}b_i, 1+(n-1)a+\sum_{i=1}^{n}b_i\r).\]
For fixed $Q\in\mathscr{Q}_a$, we should avoid the situation that $Q\cap Q_{n_1}\neq \emptyset$ and $Q\cap Q_{n_2}\neq \emptyset$ hold simultaneously. Actually, this situation may appear if $n=1$. However, we claim that the open interval $Q\in\mathscr{Q}_a$ can not intersect with more than one of these intervals $Q_{n_1}$ and $Q_{n_2}$ if $n\geq2$.

Based on this claim and the assumption $1/q-1/p+\alpha\geq0$, we can follow the scheme of the proof in Example \ref{WZL_2016_prop3.2}. For every fixed $Q\in\mathscr{Q}_a$, if $Q\cap Q_{n}\neq\emptyset$ with $n\geq2$, we conclude that
\begin{align*}
\frac{1}{\ell(Q)^{1-\alpha}}\l(\int_Q  u(x) \ddd  \gamma'(x)\r)^{1/q}\l(\int_Q v(x)^{1-p'}\ddd\gamma'(x)\r)^{1/{p'}} 
&\leq \frac{1}{\ell(Q)^{1-\alpha}}\cdot n^k\cdot n^{-k} \cdot |Q|^{1/q} \cdot |Q|^{1/{p'}} \\
&=\ell(Q)^{1/q-1/p+\alpha} \leq a^{1/q-1/p+\alpha},
\end{align*}
where $|Q|$ is the Lebesgue measure of $Q$; if $Q\cap Q_1\neq \emptyset$, we also conclude that
\begin{align*}
&\frac{1}{\ell(Q)^{1-\alpha}}\l(\int_Q  u(x) \ddd  \gamma'(x)\r)^{1/q}\l(\int_Q v(x)^{1-p'}\ddd\gamma'(x)\r)^{1/{p'}} \\
&\leq \frac{1}{\ell(Q)^{1-\alpha}}\cdot |Q|^{1/q} \cdot |Q|^{1/{p'}} \\
&=\ell(Q)^{1/q-1/p+\alpha} \leq a^{1/q-1/p+\alpha}.
\end{align*}
Hence we can obtain the result
\[[u,v]_{\mathscr{A}_{p,q,\alpha}^a}\leq a^{1/q-1/p+\alpha}<\infty.\]
However by chosing $Q=Q_n\in \mathscr{Q}'_b$, we can similarly get
\begin{align*}
[u,v]_{\mathscr{A}_{p,q,\alpha}^b}&\geq \frac{1}{\ell(Q_n)^{1-\alpha}}\l(\int_{Q_{n_1}}  u(x) \ddd  \gamma'(x)\r)^{1/q}\l(\int_{Q_{n_2}} v(x)^{1-p'}\ddd\gamma'(x)\r)^{1/{p'}} \\
&= n^{2k} \cdot \frac{1}{\ell(Q_n)^{1-\alpha}} \cdot |Q_{n_1}|^{1/q} \cdot |Q_{n_2}|^{1/{p'}} \\
&=n^{2k} \cdot \frac{1}{b_n^{1-\alpha}} \cdot \l(\frac{b_n-a_n}{2}\r)^{1/q+1/{p'}} \\
&=\frac{\l(\frac{b-a}{2}\r)^{1/q+1/{p'}}}{b^{1-\alpha}} n^{2k} \cdot |x_n|^{-\alpha-1/q+1/p} \\
&\geq \frac{\l(\frac{b-a}{2}\r)^{1/q+1/{p'}}}{b^{1-\alpha}} n^{2k} \cdot (1+na+nb)^{-\alpha-1/q+1/p},
\end{align*}
where we have used the facts that $\alpha+1/q-1/p\geq0$ and
\[|x_n|\leq 1+na+\sum_{i=1}^n b_i\leq 1+na+nb.\]
Finally by taking $k=\alpha+1/q-1/p+1>0$, we get the desired weights $u(x)$ and $v(x)$.

It remains to prove the claim above. It's easy to see that if $n\geq 2$, then $x_{n}>1+a>\sqrt{a/2}$. Notice that when $x>\sqrt{a/2}$, the function $f(x)=x+\frac{a}{2x}$ is increasing. Hence if $n\geq2$, then for all $0<c_Q\leq1$, we have
\[x_n+\frac{a}{2x_n}>a+1\geq c_Q+a;\]
and for all $1<c_Q<x_n$, we have
\[x_n+\frac{a}{2x_n}>c_Q+\frac{a}{2c_Q}.\]
On the other hand, it is obvious that if $c_Q\geq x_n$, then
\[c_Q-\frac{a}{2c_Q}\geq x_n-\frac{a}{2x_n}.\]
These comments deduce that there is no interval $Q\in\mathscr{Q}_a$ satisfies the following two conditions simultaneously
\begin{equation*}
\l\{
\begin{array}{l}
c_Q+\frac{am(c_Q)}{2}>A'_n=x_n+\frac{a}{2x_n} \\
c_Q-\frac{am(c_Q)}{2}<A_n=x_n-\frac{a}{2x_n}
\end{array}\r.
\end{equation*}
when $n\geq 2$. This finishes the proof of the claim.
\end{proof}

Now we turn to the proof of Theorem \ref{GM_2001_thm4.3}. Firstly, based on the radial versions of the local operators and weights constructed above, we can prove the following two lemmas by imitating the proofs in \cite{GM_2001} and using some basic facts on the Gaussian measure spaces.
\begin{lemma}\label{GM_2001_lemma4.2}
Let $0<a<\infty$ and $1\leq q\leq \frac{d}{d-\alpha}$. Then the pair of weights $(u,v)\in \mathscr{A}_{1,q,\alpha}^a$ if and only if
\begin{equation}\label{GM_2001_lemma4.2_1}
\l(M_{\beta}^au(x)\r)^{1/q}\leq C v(x)
\end{equation}
with $\beta=d-(d-\alpha)q$ for $\gamma'$-almost everywhere $x\in\bR^d$. Indeed, both conditions hold with the same constant.
\end{lemma}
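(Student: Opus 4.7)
The plan is to rewrite both conditions in terms of the same radial quantity and then exploit the algebraic identity $d-\beta=(d-\alpha)q$. Under the hypothesis $q\le d/(d-\alpha)$ this gives $\beta\ge 0$, so $M_\beta^a$ is a well-defined member of the same family. The radial form of $M_\beta^a u(x)$, raised to the $1/q$-power, is exactly
\[\l(M_\beta^a u(x)\r)^{1/q}\sim_{a,d}\sup_{Q\in\mathscr{Q}_a(x)}\frac{1}{\ell(Q)^{d-\alpha}}\l(\int_Q u(y)\ddd\gamma'(y)\r)^{1/q},\]
since $1/\ell(Q)^{d-\beta}=(1/\ell(Q)^{d-\alpha})^q$. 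Compared to the $\mathscr{A}_{1,q,\alpha}^a$ condition, which asserts that the single-cube version of this supremum is dominated by $Cv(x)$ for a.e.\ $x\in Q$ and every $Q\in\mathscr{Q}_a$, the two conditions differ only in whether the sup over $Q$ is taken before or after bounding pointwise.

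For the forward direction ($\mathscr{A}_{1,q,\alpha}^a\Rightarrow(\ref{GM_2001_lemma4.2_1})$), I would fix a countable dense subfamily $\mathcal{F}\subset\mathscr{Q}_a$ (for instance, cubes with rational centers and rational side lengths that lie in $\mathscr{Q}_a$), and take the union $N$ over $Q\in\mathcal{F}$ of the exceptional null sets arising from the $\mathscr{A}_{1,q,\alpha}^a$ inequality. Since this is a countable union of $\gamma'$-null sets, $N$ is $\gamma'$-null and the bound holds simultaneously for every $Q\in\mathcal{F}$ and every $x\in Q\setminus N$. A standard approximation argument---any $Q\in\mathscr{Q}_a(x)$ can be sandwiched between members of $\mathcal{F}$ with side lengths arbitrarily close to $\ell(Q)$---then lets us pass to the full supremum over $\mathscr{Q}_a(x)$ without any loss of constant, yielding $(M_\beta^a u(x))^{1/q}\le Cv(x)$ for $\gamma'$-a.e.\ $x$.

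The converse is immediate. For each fixed $Q\in\mathscr{Q}_a$ and each $x\in Q$ outside the exceptional set for the pointwise bound, the cube $Q$ itself belongs to $\mathscr{Q}_a(x)$, hence
\[\l(M_\beta^a u(x)\r)^{1/q}\ge\frac{1}{\ell(Q)^{d-\alpha}}\l(\int_Q u(y)\ddd\gamma'(y)\r)^{1/q},\]
and combining this with $(M_\beta^a u)^{1/q}\le Cv$ gives the $\mathscr{A}_{1,q,\alpha}^a$ condition with the same constant $C$.

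The only genuine obstacle is the null-set bookkeeping and the cube-approximation step in the forward direction; the rest is a direct consequence of the exponent identity $d-\beta=(d-\alpha)q$ and requires no interaction with the Gaussian structure beyond what is already built into the radialized definitions of $M_\beta^a$ and $\mathscr{A}_{1,q,\alpha}^a$.
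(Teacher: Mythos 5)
Your proposal is correct and follows essentially the same route as the paper: both directions rest on the exponent identity $d-\beta=(d-\alpha)q$, the forward implication is handled by taking the countable subfamily of cubes with rational centers and side lengths, unioning the resulting $\gamma'$-null exceptional sets, and passing to the full supremum by a continuity/approximation argument, and the converse is the same one-line observation that each $Q\in\mathscr{Q}_a$ containing $x$ is admissible in the supremum defining $M_\beta^a u(x)$. The only cosmetic difference is that the paper invokes the reduction to rational cubes at the outset and phrases the forward direction via the set $F=\{(M_\beta^a u)^{1/q}>C_0v\}\subset N$, whereas you organize the same null-set bookkeeping around the approximation step; the constants are preserved identically in both write-ups.
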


\begin{proof}
Set
\[\widetilde{\mathscr{Q}}=\l\{Q\subset \bR^d: c_Q\in \bQ^d \;\;\text{and}\;\; \ell(Q)\in \bQ^{+}\r\}, \quad \widetilde{\mathscr{Q}}_a=\widetilde{\mathscr{Q}}\cap \mathscr{Q}_a.\]
By a continuity argument, it is enough to consider the cubes just in $\widetilde{\mathscr{Q}}_a$ in the definition of $M_{\alpha}^a$.

Suppose that $(u,v)\in \mathscr{A}_{1,q,\alpha}^a$ with constant $C_0$. For every $Q\in \widetilde{\mathscr{Q}}_a$, define
\[N(Q)=\l\{x\in Q: \frac{1}{\ell(Q)^{d-\alpha}}\l(\int_Q u(y)\ddd\gamma'(y)\r)^{1/q}>C_0 v(x)\r\}, \quad  N=\bigcup_{Q\in\widetilde{\mathscr{Q}}_a} N(Q).\]
Then by the definition of $\mathscr{A}_{1,q,\alpha}^a$ we have $\gamma'(N(Q))=0$ and, since $\widetilde{\mathscr{Q}}_a$ is countable, $\gamma'(N)=0$. Set
\[F=\l\{y\in\bR^d: \l(M_{\beta}^a(u)(y)\r)^{1/q}>C_0 v(y)\r\},\]
where $\frac{d-\beta}{q}=d-\alpha$. Hence for every $y\in F$, there exists a cube $Q\in\widetilde{\mathscr{Q}}_a(y)$ such that
\[\frac{1}{\ell(Q)^{d-\alpha}}\l(\int_Q u(x)\ddd\gamma'(x)\r)^{1/q}>C_0 v(y).\]
This yields that $F\subset N$ and $\gamma'(F)=0$. Equivalently, we have proved
\[\l(M_{\beta}^au(x)\r)^{1/q}\leq C_0 v(x) \quad \gamma'-a.e. \;\;x\in\bR^d.\]

On the other hand if (\ref{GM_2001_lemma4.2_1}) holds with constant $C_0$, then for any fixed $Q\in\widetilde{\mathscr{Q}}_a$ and $\gamma'$-a.e. $y\in Q$ we directly conclude
\begin{align*}
\frac{1}{\ell(Q)^{d-\alpha}}\l(\int_Q u(y)\ddd\gamma'(y)\r)^{1/q}&\leq \l(\sup_{Q\in\mathscr{Q}_a(y)} \frac{1}{\ell(Q)^{(d-\alpha)q}} \int_Q u(x)\ddd\gamma'(x)\r)^{1/q} \\
&=\l(M_{\beta}^a (u)(y)\r)^{1/q} \\
&\leq C_0 v(y).
\end{align*}
\end{proof}

\begin{lemma}\label{GM_2001_thm4.3.1}
Given $a\in(0,\infty)$, $1\leq p\leq q<\infty$, $0\leq\alpha<d$ and a pair of weights $(u,v)$, then the following two statements are equivalent:
\begin{itemize}
\item[(i)] $(u,v)\in \mathscr{A}_{p,q,\alpha}^{a}$.
\item[(ii)] For every $f\geq0$ and every cube $Q\in\mathscr{Q}_a$,
\[\l(\frac{1}{\ell(Q)^{d-\alpha}}\int_Q f(x)\ddd\gamma'(x)\r)^q \l(\int_Q u(x)\ddd\gamma'(x)\r)\leq C\l(\int_Q f(x)^p v(x)\ddd\gamma'(x)\r)^{q/p}.\]
\end{itemize}
\end{lemma}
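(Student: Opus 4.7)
The plan is to prove both implications, treating the cases $p>1$ and $p=1$ separately since the $\mathscr{A}_{p,q,\alpha}^a$ condition has a different form in each.

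For (i)$\Rightarrow$(ii) when $p>1$, I would apply Hölder's inequality to the factorization $f = (f v^{1/p})\cdot v^{-1/p}$ with exponents $p$ and $p'$, obtaining
\[
\int_Q f\,\ddd\gamma'(x) \leq \l(\int_Q f^p v\,\ddd\gamma'\r)^{1/p}\l(\int_Q v^{1-p'}\,\ddd\gamma'\r)^{1/p'},
\]
since $-p'/p = 1-p'$. Raising this to the $q$-th power and multiplying by $\int_Q u\,\ddd\gamma'$ makes the factor $\ell(Q)^{-(d-\alpha)q}(\int_Q u)(\int_Q v^{1-p'})^{q/p'}$ appear; by the $\mathscr{A}_{p,q,\alpha}^a$ definition this is bounded by $[u,v]_{\mathscr{A}_{p,q,\alpha}^a}^q$, yielding (ii). For $p=1$, I would instead invoke the definition of $\mathscr{A}_{1,q,\alpha}^a$ directly: it says $\ell(Q)^{-(d-\alpha)}(\int_Q u\,\ddd\gamma')^{1/q}\le C\,v(x)$ for $\gamma'$-a.e.\ $x\in Q$, so $\ell(Q)^{-(d-\alpha)}(\int_Q u\,\ddd\gamma')^{1/q}\le C\,\mathop{\mathrm{ess\,inf}}_{Q} v$. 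Writing $\int_Q f\,\ddd\gamma' = \int_Q (fv)\cdot v^{-1}\,\ddd\gamma' \le (\mathop{\mathrm{ess\,inf}}_Q v)^{-1}\int_Q fv\,\ddd\gamma'$ and combining gives (ii).

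For (ii)$\Rightarrow$(i) when $p>1$, the natural test function is $f = v^{1-p'}\chi_Q$. A short exponent computation gives $f^p v = v^{(1-p')p+1} = v^{1-p'}$, so that $\int_Q f^p v\,\ddd\gamma' = \int_Q f\,\ddd\gamma' = \int_Q v^{1-p'}\,\ddd\gamma'$. Plugging into (ii), a factor of $(\int_Q v^{1-p'})^{q/p}$ cancels, leaving the $\mathscr{A}_{p,q,\alpha}^a$ expression raised to the $q$-th power; take $q$-th roots to finish. To handle the possible non-integrability of $v^{1-p'}$, I would first test on $f_k = \min(v^{1-p'},k)\chi_Q$ and let $k\to\infty$ by monotone convergence. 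For $p=1$, I would test on $f = \chi_E$ for arbitrary measurable $E\subset Q$ with $\gamma'(E)>0$, which yields
\[
\frac{1}{\ell(Q)^{d-\alpha}}\l(\int_Q u\,\ddd\gamma'\r)^{1/q} \leq C^{1/q}\cdot \frac{1}{\gamma'(E)}\int_E v\,\ddd\gamma'.
\]
Then shrinking $E$ to cubes centered at a Lebesgue point $y\in Q$ of $v$ and invoking the Lebesgue differentiation theorem (valid with respect to $\gamma'$, which is mutually absolutely continuous with Lebesgue measure) gives the pointwise $\mathscr{A}_{1,q,\alpha}^a$ inequality for $\gamma'$-a.e.\ $y\in Q$.

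The argument is largely routine; the main care needed is bookkeeping of exponents (in particular the identity $(1-p')p+1 = 1-p'$) and the truncation/Lebesgue-differentiation step that ensures the test-function arguments are legitimate regardless of integrability issues or degeneracy of $v$. No other obstacle is expected, since once the correct test functions are chosen, all estimates are sharp and the constants match.
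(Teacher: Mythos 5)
Your proposal is correct and follows essentially the same route as the paper: H\"older's inequality (resp.\ the pointwise $\mathscr{A}_{1,q,\alpha}^a$ bound) for (i)$\Rightarrow$(ii), and testing (ii) on $f=v^{1-p'}\chi_Q$ with a truncation to handle non-integrability for the converse, with the $p=1$ converse obtained from the subset-averaged inequality. The only cosmetic difference is that you finish the $p=1$ case by Lebesgue differentiation at Lebesgue points of $v$, whereas the paper (deferring details to Garc\'{\i}a-Cuerva--Martell) uses level sets of $v$ near its essential infimum and a continuity argument; both are standard and your write-up actually supplies details the paper omits.
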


\begin{proof}
We adapt some ideas from \cite{GM_2001}. Firstly, let us prove the conclusion (ii) with the assumption that $(u,v)\in \mathscr{A}_{p,q,\alpha}^{a}$. In the case $p=1$, for every $f\geq0$ and every $Q\in \mathscr{Q}_a$ we have
\begin{align*}
\l(\frac{1}{\ell(Q)^{d-\alpha}}\int_Q f(x)\ddd\gamma'(x)\r)^q \l(\int_Q u(x)\ddd\gamma'(x)\r)
&=\l[\int_Q f(x)\frac{1}{\ell(Q)^{d-\alpha}} \l(\int_Q u(x)\ddd\gamma'(x)\r)^{1/q}\ddd\gamma'(x)\r]^q \\
&\leq C \l(\int_Q f(x) v(x)\ddd\gamma'(x)\r),
\end{align*}
where the last inequality comes from $(u,v)\in \mathscr{A}_{1,q,\alpha}^a$. In the case $1<p<\infty$, by $\holder$'s inequality
\[\l(\frac{1}{\ell(Q)^{d-\alpha}}\int_Q f(x)\ddd\gamma'(x)\r)^q
\leq \frac{1}{\ell(Q)^{(d-\alpha)q}}\l(\int_Q f(x)^p v(x) \ddd\gamma'(x)\r)^{q/p} \l(\int_Q v(x)^{1-p'} \ddd \gamma'(x)\r)^{q/{p'}}.\]
Thus the condition $(u,v)\in\mathscr{A}_{p,q,\alpha}^a$ yields that
\begin{align*}
&\l(\frac{1}{\ell(Q)^{d-\alpha}}\int_Q f(x)\ddd\gamma'(x)\r)^q \l(\int_Q u(x)\ddd\gamma'(x)\r) \\
&\leq \frac{1}{\ell(Q)^{(d-\alpha)q}}\l(\int_Q f(x)^p v(x) \ddd\gamma'(x)\r)^{q/p} \l(\int_Q v(x)^{1-p'} \ddd \gamma'(x)\r)^{q/{p'}} \l(\int_Q u(x)\ddd\gamma'(x)\r) \\
&\leq [u,v]^q_{\mathscr{A}_{p,q,\alpha}^a} \l(\int_Q f(x)^p v(x) \ddd\gamma'(x)\r)^{q/p}.
\end{align*}

On the other hand, suppose the statement (ii) is right. Take $f\geq 0$. For any $S\subset Q\in\mathscr{Q}_a$, using (ii) with $f\chi_{S}(x)$ we get the following
\begin{equation}\label{GM_2001_thm4.3.1_1}
\l(\frac{1}{\ell(Q)^{d-\alpha}}\int_{S} f(x)\ddd \gamma'(x)\r)^q\l(\int_Q u(x)\ddd\gamma'(x)\r)\leq C\l(\int_S f(x)^p v(x)\ddd \gamma'(x)\r)^{q/p}.
\end{equation}
Then by taking $f\equiv1$ we obtain
\begin{equation}\label{GM_2001_thm4.3.1_2}
\l(\frac{\gamma'(S)}{\ell(Q)^{d-\alpha}}\r)^q\l[u(Q,\gamma')\r]\leq C \l[v(S,\gamma')\r]^{q/p},
\end{equation}
where
\[\gamma'(S)=\int_{S} \ddd \gamma'(x),\quad u(Q,\gamma')=\int_Q u(x)\ddd\gamma'(x), \quad v(S,\gamma')=\int_S v(x)\ddd \gamma'(x).\]
For any $Q\in\mathscr{Q}_a$, from this inequality (\ref{GM_2001_thm4.3.1_2}) we claim that
\[v(x)>0 \;\;\;\gamma'-a.e. \;x\in Q \quad\text{unless} \quad u(x)=0 \;\;\;\gamma'-a.e. \;x\in Q;\]
\[u(Q,\gamma')<\infty \quad \text{unless} \quad v(x)=\infty \;\;\;\gamma'- a.e. \;x\in Q.\]
Indeed, if $v(x)=0$ on some $S\subset Q$ with $\gamma'(S)>0$, from the inequality (\ref{GM_2001_thm4.3.1_2}) we have $u(Q,\gamma')=0$; if $u(Q,\gamma')=\infty$, using this estimate (\ref{GM_2001_thm4.3.1_2}) again, we conclude that $v(S,\gamma')=\infty$ for all $S\subset Q$ with $\gamma'(S)>0$. Hence we finish the proof of the claim.

Once we have done these observations shown above, we can prove this lemma by following the scheme of the proof in \cite[Theorem 4.3]{GM_2001}. For the case $1<p<\infty$, we can obtain the desired conclusion by setting
\[f(x)=v(x)^{1-p'}, \quad S_j=\l\{x\in Q: v(x)>\frac{1}{j}\r\}\]
where $Q\in\mathscr{Q}_a$ and using some locally integrable arguments together with the fact $v(x)>0$ for $\gamma'$-a.e. $x\in Q$; for the case $p=1$, we can get the desired result by rewriting the estimate (\ref{GM_2001_thm4.3.1_2}) as
\[\frac{1}{\ell(Q)^{d-\alpha}}\l(\int_Q u(x) \ddd \gamma'(x)\r)^{1/q}\leq C \frac{v(S,\gamma')}{\gamma'(S)} \quad \forall S\subset Q\in\mathscr{Q}_a \;\;\text{with}\;\; \gamma'(S)>0,\]
considering
\[a>\mathop{\mathrm{ess}\inf}\limits_{Q}:=\inf\l\{t>0: \gamma'\{x\in Q: v(x)<t\}>0\r\}, \quad S_a=\{x\in Q: v(x)<a\}\subset Q\in\mathscr{Q}_a\]
and using a continuity argument. This part is a little long but essentially the same as~\cite[Theorem 4.3]{GM_2001} since the restriction $Q\in\mathscr{Q}_a$ does not metter. Thereby we omit the detailed proof here.
\end{proof}

Next we can use the dyadic analysis and covering theorem together with some techniques on Gaussian measure spaces to establish the two-weight weak-type boundedness of $M_{\alpha}^a$ with respect to the $A_{p,q,\alpha}^a$ condition on Gaussian Lebesgue spaces.
\begin{proof}[\textbf{Proof of Theorem \ref{GM_2001_thm4.3}}]
By using Proposition \ref{DHY_RBAA}, we will prove that if $(u,v)\in\mathscr{A}_{p,q,\alpha}^b$ for some $b>a$, then
\begin{equation}\label{GM_2001_thm4.3_1}
\int_{\{x\in\bR^d: M_{\alpha}^a(f)(x)>\lambda\}} u(x)\ddd\gamma'(x)\lesssim \frac{1}{\lambda^q} \l(\int_{\bR^d}|f(x)|^p v(x)\ddd\gamma'(x)\r)^{q/p}
\end{equation}
holds for every $\lambda>0$.

It is sufficient to concentrate on the nonnegative functions $f\in L^p(\bR^d, v,\gamma')$. By Lemma \ref{GM_2001_thm4.3.1}, we know that the following estimate
\begin{equation}\label{GM_2001_thm4.3_2}
\l(\frac{1}{\ell(Q)^{d-\alpha}}\int_Q f(x)\ddd\gamma'(x)\r)^q \l(\int_Q u(x)\ddd\gamma'(x)\r)\lesssim\l(\int_Q f(x)^p v(x)\ddd\gamma'(x)\r)^{q/p}
\end{equation}
holds for every $f\geq0$ and every $Q\in\mathscr{Q}_b$. When $f\in L^p(\bR^d, v,\gamma')$ and $Q\in \mathscr{Q}_b$ with $u(Q,\gamma')>0$, the inequality (\ref{GM_2001_thm4.3_2}) yields that
\[\int_Q f(x)\ddd\gamma'(x)<\infty.\]
It follows that if $\int_{Q_0} f(x)\ddd\gamma'(x)=\infty$ for some cube $Q_0$, then $u(Q_0,\gamma')=0$ and there is a improper point $x_0$ of the function $f(x)$ contained in the closure of the cube $Q_0$. For this $x_0$, using the estimate (\ref{GM_2001_thm4.3_2}) again we conclude that $u(Q',\gamma')=0$ for all $Q'\in \mathscr{Q}_b(x_0)$. On the other hand, by the definition of $M_{\alpha}^a$ we konw that $M_{\alpha}^a(f)(x)=\infty$ for all $x\in Q'\in \mathscr{Q}_{a}(x_0)$.
Thus we observe that
\[u(x)\equiv0, \quad M_{\alpha}^a(f)(x)\equiv\infty\]
for $\gamma'$-a.e. $x\in\bigcup_{Q'\in \mathscr{Q}_a(x_0)} Q'$. Notice that the desired result (\ref{GM_2001_thm4.3_1}) also holds in this case. Thereby we only need to focus on the situation $x\notin\bigcup_{Q'\in \mathscr{Q}_b(x_0)} Q'$ where $M_{\alpha}^a(f)(x)<\infty$. In this way, without loss of generality, we can assume that
\[\int_Q f(x)\ddd\gamma'(x)<\infty\]
for all $Q\in \mathscr{Q}_a$. By the Heine-Borel theorem, we can assume $f\in L^1_{\mathrm{loc}}(\bR^d, \gamma')$. Moreover, by defining $f_k=f\chi_{Q(0,k)}$, if the estimate (\ref{GM_2001_thm4.3_1}) holds for each $f_k\in L^1(\bR^d, \gamma')$ with the constant independent of $k$, then using Fatou's lemma we can obtain (\ref{GM_2001_thm4.3_1}) for all $f\in L^1_{\mathrm{loc}}(\bR^d, \gamma')$. Hence we can assume $f\in L^1(\bR^d, \gamma')$. Take all these remarks into account, we only need to prove the desired result for
\[f\geq0, \quad f\in L^p(\bR^d, v, \gamma')\cap L^1(\bR^d, \gamma').\]
Define 
\[E_{\lambda}:=\{x\in\bR^d: M_{\alpha}^a(f)(x)>\lambda\}.\] Then for every $x\in E_{\lambda}$, there exists a cube $Q_x\in\mathscr{Q}_a(x)$ such that
\[\frac{1}{\ell(Q_x)^{d-\alpha}}\int_{Q_x} |f(y)|\ddd \gamma' (y)>\lambda.\]
Note that $\ell(Q_x)\leq a$. By \cite[Theorem 1.5]{Sawano_2005}, there exists a constant $N=N(d,k)$ depending only on the dimension $d$ and the ratio $k=b/a$ such that
\[E_{\lambda}\subset \bigcup_{x\in E_{\lambda}}Q_x \subset \bigcup_{j=1}^{N} \bigcup_{x_{\rho}\in E_{\lambda, j}} k Q_{x_{\rho}},\]
where the cubes in the subfamilies $\{Q_{x_{\rho}}\}_{x_{\rho}\in E_{\lambda, j}}$ are disjiont for fixed $j$.
Recall that (\ref{GM_2001_thm4.3_2}) leads to (\ref{GM_2001_thm4.3.1_1}). By taking $S=Q_{x_\rho}$ and $Q=kQ_{x_\rho}\in\mathscr{Q}_b$ in this estimate (\ref{GM_2001_thm4.3.1_1}) we can obtain
\begin{align*}
\int_{E_{\lambda}} u(x)\ddd\gamma'(x)&\leq \sum_{j=1}^N \sum_{x_{\rho} \in E_{\lambda, j}} \int_{kQ_{x_\rho}} u(x)\ddd\gamma'(x) \\
&\lesssim \sum_{j=1}^N \sum_{x_{\rho} \in E_{\lambda, j}} \l(\frac{1}{\ell(Q_{x_\rho})^{d-\alpha}} \int_{Q_{x_\rho}} f(x)\ddd\gamma'(x)\r)^{-q} \l(\int_{Q_{x_\rho}} f(x)^p v(x) \ddd\gamma'(x)\r)^{q/p} \\
&\leq \sum_{j=1}^N \sum_{x_{\rho} \in E_{\lambda, j}} \frac{1}{\lambda^q} \l(\int_{Q_{x_\rho}} f(x)^p v(x) \ddd\gamma'(x)\r)^{q/p} \\
&\leq \sum_{j=1}^N \frac{1}{\lambda^q} \l(\sum_{x_{\rho} \in E_{\lambda, j}} \int_{Q_{x_\rho}} f(x)^p v(x) \ddd\gamma'(x)\r)^{q/p} \\
&\leq \frac{N}{\lambda^q}\l(\int_{\bR^d} f(x)^p v(x)\ddd\gamma'(x)\r)^{q/p},
\end{align*}
where we have used the fact $p\leq q$ and the disjointness of $\{Q_{x_{\rho}}\}_{E_{\lambda, j}}$ for fixed $j$.
\end{proof}

Now we turn to the proof of the two-weight weak-type estimate for the local fractional integral operator $\tilde{I}_{\alpha}^a$ with cubes on Gaussian measure spaces. We need to introduce the following lemma first.
\begin{lemma}\label{LM_2020_lemma3.4}
Let $a\in (0,\infty)$, $0\leq \alpha_1<\alpha<\alpha_2\leq d$ and $f\in L_{\mathrm{loc}}^1(\gamma)$. If $\tilde{b}>a$, then there exists a constant $C$ independent of the function $f$ such that
\[|\tilde{I}_{\alpha}^{a}(f)(x)| \leq C\l(M_{\alpha_1}^a (f)(x)\r)^{\frac{\alpha_2-\alpha}{\alpha_2-\alpha_1}} \l(M_{\alpha_2}^{\tilde{b}} (f)(x)\r)^{\frac{\alpha-\alpha_1}{\alpha_2-\alpha_1}}\]
holds for all $x\in \bR^d$.
\end{lemma}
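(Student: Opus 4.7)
The approach is the classical Welland splitting trick, adapted to the Gaussian local setting. Fix $x\in\bR^d$ and a parameter $\delta\in(0,am(x)/2]$, to be optimized later. Decompose
\[|\tilde{I}_\alpha^a(f)(x)|\leq I_1(\delta)+I_2(\delta),\]
where $I_1(\delta)$ collects the contribution of $\{y\in Q(x,am(x)):|x-y|\leq\delta\}$ and $I_2(\delta)$ that of the complementary far region inside $Q(x,am(x))$. The plan is to control $I_1(\delta)$ by $M_{\alpha_1}^a(f)(x)$ times a power of $\delta$, control $I_2(\delta)$ by $M_{\alpha_2}^{\tilde{b}}(f)(x)$ times another power of $\delta$, and then optimize $\delta$.

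For $I_1(\delta)$, I would decompose dyadically into shells $S_k=\{2^{-k-1}\delta<|x-y|\leq 2^{-k}\delta\}$ for $k\geq 0$. On each $S_k$, lower-bound the denominator $\gamma(Q(x,|x-y|))^{1-\dot{\alpha}}$ by $\gamma(Q(x,2^{-k-1}\delta))^{1-\dot{\alpha}}$, enclose the shell in the cube $Q(x,2^{1-k}\delta)$, which lies in $\mathscr{Q}_a(x)$ since $2^{1-k}\delta\leq 2\delta\leq am(x)$, and apply the definition of $M_{\alpha_1}^a$ to the integral $\int_{Q(x,2^{1-k}\delta)}|f|\,\ddd\gamma$. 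By the Gaussian asymptotic $\gamma(Q(x,r))\sim_{a,d}e^{-|x|^2}r^d$ recalled in Section~\ref{DHY_2101_preliminaries}, the $k$-th summand reduces to a universal constant times $e^{-|x|^2(\dot{\alpha}-\dot{\alpha_1})}\delta^{\alpha-\alpha_1}\cdot 2^{-k(\alpha-\alpha_1)}$, and summing the geometric series (convergent because $\alpha>\alpha_1$) yields
\[I_1(\delta)\lesssim e^{-|x|^2(\dot{\alpha}-\dot{\alpha_1})}\delta^{\alpha-\alpha_1}M_{\alpha_1}^a(f)(x).\]
A symmetric argument for $I_2(\delta)$ uses outgoing shells $T_k=\{2^k\delta<|x-y|\leq 2^{k+1}\delta\}$ and encloses $T_k\cap Q(x,am(x))$ in the cube $\tilde{Q}_k=Q(x,\min\{2^{k+2}\delta,\tilde{b}m(x)\})\in\mathscr{Q}_{\tilde{b}}(x)$; the hypothesis $\tilde{b}>a$ guarantees $\tilde{b}m(x)\geq am(x)$, so that the capped cube still contains the truncated shell. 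The analogous geometric series (convergent since $\alpha<\alpha_2$) then gives
\[I_2(\delta)\lesssim e^{|x|^2(\dot{\alpha_2}-\dot{\alpha})}\delta^{\alpha-\alpha_2}M_{\alpha_2}^{\tilde{b}}(f)(x).\]

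It then remains to minimize the sum $A\delta^{\alpha-\alpha_1}+B\delta^{\alpha-\alpha_2}$ in $\delta$, which by the standard Young-type calculation produces the geometric mean $A^{(\alpha_2-\alpha)/(\alpha_2-\alpha_1)}B^{(\alpha-\alpha_1)/(\alpha_2-\alpha_1)}$. The crux of the argument is that when the Gaussian prefactors are carried through this interpolation, the resulting exponent of $|x|^2$ is
\[-(\dot{\alpha}-\dot{\alpha_1})\frac{\alpha_2-\alpha}{\alpha_2-\alpha_1}+(\dot{\alpha_2}-\dot{\alpha})\frac{\alpha-\alpha_1}{\alpha_2-\alpha_1}=\frac{-(\alpha-\alpha_1)(\alpha_2-\alpha)+(\alpha_2-\alpha)(\alpha-\alpha_1)}{d(\alpha_2-\alpha_1)}=0,\]
so the two exponential factors cancel exactly. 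This cancellation — a direct algebraic consequence of the homogeneity of $\gamma(Q(x,r))\sim e^{-|x|^2}r^d$ — is what makes the Welland inequality survive in the Gaussian setting and produces precisely the stated bound. The minor remaining subtlety, where the optimal $\delta$ exceeds $am(x)/2$, is handled by capping $\delta$ at this endpoint and observing that one of the two maximal terms already dominates $\tilde{I}_\alpha^a(f)(x)$ directly. The main obstacle is tracking the capping in the far-part summation carefully enough to make the hypothesis $\tilde{b}>a$ (rather than some larger dimensional constant like $2\sqrt{d}\,a$) genuinely suffice.
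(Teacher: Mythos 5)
Your proposal is correct and follows essentially the same route as the paper: a Welland-type splitting of $\tilde{I}_{\alpha}^a$ at an intermediate scale, dyadic shell estimates using the local doubling and reverse doubling of $\gamma$ to control the near part by $M_{\alpha_1}^a$ and the far part by $M_{\alpha_2}^{\tilde{b}}$, and then optimization of the splitting scale, with the overflow case (your capped $\delta$) handled by the near-part bound alone exactly as in the paper's Case 1. The only difference is presentational: the paper parametrizes the split by the Gaussian measure $\gamma\l(Q(x,rm(x))\r)$ and realizes the optimal value via an intermediate-value lemma, which makes the exponential cancellation you verify explicitly happen automatically.
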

\begin{proof}
We adapt some ideas from \cite[Lemma 3.4]{LM_2020} and \cite[Lemma 4.1]{LX_2017}. Set $\dot{\alpha}_1=\alpha_1/d$ and $\dot{\alpha}_2=\alpha_2/d$. If $x=(x_1, \ldots, x_d) \in \bR^d$, we define the norm $\|\cdot\|$ on $\bR^d$ by
\[\|x\|=\max\{|x_1|,\ldots, |x_d|\}.\]
Then for any $x\in\bR^d$, we divide the proof into two cases.

\textbf{Case 1:} $\gamma\l[Q(x, am(x))\r]\leq \l(\frac{M_{\alpha_2}^{\tilde{b}} (f)(x)}{M_{\alpha_1}^a (f)(x)}\r)^{\frac{1}{\dot{\alpha}_2-\dot{\alpha}_1}}$.

By using the locally reverse doubling property of $\gamma$ on the admissiable cubes $\mathscr{Q}_a$, we obtain a constant $R=R_{d,a}\in (1,\infty)$ such that
\begin{align*}
|\tilde{I}_{\alpha}^a (f)(x)|&\leq \sum_{j=0}^{\infty} \int_{2^{-j-1} am(x)\leq 2\|x-y\|<2^{-j}am(x)} \frac{|f(y)|}{[\gamma(Q(x, |x-y|))]^{1-\dot{\alpha}}} \ddd\gamma(y) \\
&\lesssim \sum_{j=0}^{\infty} \frac{[\gamma(Q(x,2^{-j}am(x)))]^{\dot{\alpha}-\dot{\alpha}_1}}{[\gamma(Q(x,2^{-j}am(x)))]^{1-\dot{\alpha}_1}} \int_{Q(x, 2^{-j}am(x))} |f(y)| \ddd\gamma(y) \\
&\leq [\gamma(Q(x, am(x)))]^{\dot{\alpha}-\dot{\alpha}_1} \sum_{j=0}^{\infty} R^{-j(\dot{\alpha}-\dot{\alpha}_1)} M_{\alpha_1}^a (f)(x) \\
&\lesssim [\gamma(Q(x, am(x)))]^{\dot{\alpha}-\dot{\alpha}_1} M_{\alpha_1}^a (f)(x).
\end{align*}
Combining this estimate and the assumption in this case, we get the desired conclusion that
\begin{align*}
|\tilde{I}_{\alpha}^{a}(f)(x)|&\lesssim \l(M_{\alpha_1}^a (f)(x)\r)^{\frac{\dot{\alpha}_2-\dot{\alpha}}{\dot{\alpha}_2-\dot{\alpha}_1}} \l(M_{\alpha_2}^{\tilde{b}} (f)(x)\r)^{\frac{\dot{\alpha}-\dot{\alpha}_1}{\dot{\alpha}_2-\dot{\alpha}_1}} \\
&=\l(M_{\alpha_1}^a (f)(x)\r)^{\frac{\alpha_2-\alpha}{\alpha_2-\alpha_1}} \l(M_{\alpha_2}^{\tilde{b}} (f)(x)\r)^{\frac{\alpha-\alpha_1}{\alpha_2-\alpha_1}}.
\end{align*}

\textbf{Case 2:} $\gamma\l[Q(x, am(x))\r]> \l(\frac{M_{\alpha_2}^{\tilde{b}} (f)(x)}{M_{\alpha_1}^a (f)(x)}\r)^{\frac{1}{\dot{\alpha}_2-\dot{\alpha}_1}}$.

From \cite[Lemma 2.11]{LSY_2014} we know there exists an $r\in (0,a)$ such that
\begin{equation}\label{LM_2020_lemma3.4_1}
\frac{1}{2}\l(\frac{M_{\alpha_2}^{\tilde{b}} (f)(x)}{M_{\alpha_1}^a (f)(x)}\r)^{\frac{1}{\dot{\alpha}_2-\dot{\alpha}_1}}< \gamma\l[Q(x, rm(x))\r]< \l(\frac{M_{\alpha_2}^{\tilde{b}} (f)(x)}{M_{\alpha_1}^a (f)(x)}\r)^{\frac{1}{\dot{\alpha}_2-\dot{\alpha}_1}}.
\end{equation}
Then we can write the following
\[|\tilde{I}_{\alpha}^a(f)(x)|\leq \int_{2\|x-y\|<rm(x)} \frac{|f(y)|}{[\gamma(Q(x, |x-y|))]^{1-\dot{\alpha}}} \ddd\gamma(y) + \int_{rm(x)\leq 2\|x-y\|<am(x)} \cdots \ddd\gamma(y)=: I_1+I_2.\]
For the term $I_1$, using the similar method in the first case, except for the $a$ replaced by $r$, we conclude that
\[I_1\lesssim [\gamma(Q(x, rm(x)))]^{\dot{\alpha}-\dot{\alpha}_1} M_{\alpha_1}^a (f)(x).\]
For the term $I_2$, setting $N_r:=\lfloor\log_{\frac{\tilde{b}}{a}} \l(\frac{a}{r}\r)\rfloor$ be the maximal integer no more than $\log_{\frac{\tilde{b}}{a}} \l(\frac{a}{r}\r)$ we can get
\begin{align*}
I_2&\leq \sum_{j=0}^{N_r} \int_{\l(\frac{\tilde{b}}{a}\r)^j rm(x)\leq 2\|x-y\|<\l(\frac{\tilde{b}}{a}\r)^{j+1} rm(x)} \frac{|f(y)|}{[\gamma(Q(x, |x-y|))]^{1-\dot{\alpha}}} \ddd\gamma(y) \\
&\leq \sum_{j=0}^{N_r} \frac{1}{\l[\gamma\l(Q\l(x,\l(\frac{\tilde{b}}{a}\r)^{j}rm(x)\r)\r)\r]^{1-\dot{\alpha}}} \int_{Q\l(x,\l(\frac{\tilde{b}}{a}\r)^{j+1}rm(x)\r)} |f(y)| \ddd\gamma(y).
\end{align*}
Note that $Q\l(x,\l(\frac{\tilde{b}}{a}\r)^{j+1} rm(x)\r) \in \mathscr{Q}_{\tilde{b}}$ for all $j=1,\ldots, N_r$. Hence using the similar approach in the first case, by the locally reverse doubling property of $\gamma$ on $\mathscr{Q}_{\tilde{b}}$ we know there exists a constant $\tilde{R}=R_{d,\tilde{b}}\in (1,\infty)$ such that
\begin{align*}
I_2&\lesssim \sum_{j=0}^{N_r} \frac{\l[\gamma\l(Q\l(x, \l(\frac{\tilde{b}}{a}\r)^j rm(x)\r)\r)\r]^{\dot{\alpha}-\dot{\alpha}_2}}{\l[\gamma\l(Q\l(x, \l(\frac{\tilde{b}}{a}\r)^{j+1} rm(x)\r)\r)\r]^{1-\dot{\alpha}_2}} \int_{Q\l(x,\l(\frac{\tilde{b}}{a}\r)^{j+1} rm(x)\r)} |f(y)| \ddd\gamma(y) \\
&\leq \l[\gamma\l(Q\l(x, rm(x)\r)\r)\r]^{\dot{\alpha}-\dot{\alpha}_2} \sum_{j=0}^{N_r} \tilde{R}^{j(\dot{\alpha}-\dot{\alpha}_2)} M_{\alpha_2}^{\tilde{b}} (f)(x) \\
&\leq \l[\gamma\l(Q\l(x, rm(x)\r)\r)\r]^{\dot{\alpha}-\dot{\alpha}_2} \sum_{j=0}^{\infty} \tilde{R}^{j(\dot{\alpha}-\dot{\alpha}_2)} M_{\alpha_2}^{\tilde{b}} (f)(x) \\
&\lesssim  \l[\gamma\l(Q\l(x, rm(x)\r)\r)\r]^{\dot{\alpha}-\dot{\alpha}_2} M_{\alpha_2}^{\tilde{b}} (f)(x).
\end{align*}
Combining these two estimates of $I_1$ and $I_2$, then taking the condition (\ref{LM_2020_lemma3.4_1}) into account we conclude the following desired result
\begin{align*}
|\tilde{I}_{\alpha}^a(f)(x)|&\lesssim [\gamma(Q(x, rm(x)))]^{\dot{\alpha}-\dot{\alpha}_1} M_{\alpha_1}^a (f)(x)+ \l[\gamma\l(Q\l(x, rm(x)\r)\r)\r]^{\dot{\alpha}-\dot{\alpha}_2} M_{\alpha_2}^{\tilde{b}} (f)(x) \\
&\lesssim \l(M_{\alpha_1}^a (f)(x)\r)^{\frac{\alpha_2-\alpha}{\alpha_2-\alpha_1}} \l(M_{\alpha_2}^{\tilde{b}} (f)(x)\r)^{\frac{\alpha-\alpha_1}{\alpha_2-\alpha_1}}.
\end{align*}
Thereby we complete the proof of this lemma.
\end{proof}

\begin{proof}[\textbf{Proof of Theorem \ref{TWWTEFI}}]
Assume that $(u,v)\in A_{p,q,\alpha-\varepsilon}^b$ for some $b>a$ and $\varepsilon>0$. First, the fact $\gamma(Q)<1$ yields that
\[A_{p,q,\alpha_1}^{b}\subset A_{p,q,\alpha_2}^{b}\]
if $0<\alpha_1<\alpha_2<d$. Hence it is sufficient to consider the case $0<\varepsilon<\min\{\alpha, d-\alpha\}$. Then using Lemma \ref{LM_2020_lemma3.4} with
\[\tilde{b}=\frac{a+b}{2}, \quad \alpha_2-\alpha=\alpha-\alpha_1=\varepsilon\]
and using the $\holder$'s inequality for weak spaces (see \cite[Exercise 1.1.15]{GTM_249}) with $q_1=q_2=2q$ we conclude that
\begin{align*}
\l\|\tilde{I}_{\alpha}^a(f)\r\|_{L^{q,\infty}(\bR^d, u, \gamma)}
&\lesssim \l\|\l(M_{\alpha+\varepsilon}^{\frac{a+b}{2}}(f)(\cdot)\r)^{1/2}\l(M_{\alpha-\varepsilon}^a(f)(\cdot)\r)^{1/2}\r\|_{L^{q,\infty}(\bR^d, u, \gamma)} \\
&\leq \l\|\l(M_{\alpha+\varepsilon}^{\frac{a+b}{2}}(f)(\cdot)\r)^{1/2}\r\|_{L^{q_1,\infty}(\bR^d, u, \gamma)} \l\|\l(M_{\alpha-\varepsilon}^a(f)(\cdot)\r)^{1/2}\r\|_{L^{q_2,\infty}(\bR^d, u, \gamma)} \\
&=\l\|M_{\alpha+\varepsilon}^{\frac{a+b}{2}}(f)\r\|^{1/2}_{L^{q,\infty}(\bR^d, u, \gamma)} \l\|M_{\alpha-\varepsilon}^a(f)\r\|^{1/2}_{L^{q,\infty}(\bR^d, u, \gamma)} \\
&\lesssim \|f\|_{L^p(\bR^d, u, \gamma)},
\end{align*}
where the last inequality comes from Theorem \ref{GM_2001_thm4.3}.
\end{proof}

Based on the Theorem \ref{GM_2001_thm4.3} proved above and the one-weight results in~\cite{WZL_2016}, it is natural to ask the following question.
\begin{question}\label{DHY_2101_question1}
Is $(u,v)\in A_{p,q,\alpha}^a$ rather than $(u,v)\in\bigcup_{b'>a} A_{p,q,\alpha}^{b'}$ sufficient for the two-weight weak-type boundedness of $M_{\alpha}^a$?
\end{question}
It is not hard to see that $(u,v)\in A_{p,q,\alpha}^a$ is necessary for the two-weight weak-type boundedness of $M_{\alpha}^a$. This problem (the equivalence) may be solved by means of using other covering theorem instead of \cite[Theorem 1.5]{Sawano_2005} or other more delicate approach, but we have no progress yet. On the other hand, we have the following example to yield that the relation
\[\mathscr{A}_{p,q,\alpha}^a=\bigcup_{b'>a}\mathscr{A}_{p,q,\alpha}^{b'}\]
is not always true. Here we only give the example for the case $a=1$, the method is also valid for general $a>0$.

\begin{example}
Let $u(x)$ and $v(x)$ be even functions. When $x\in\bR^+$,
\[v(x)=\l\{\begin{array}{ll}
x^{\frac{1}{p'-1}}e^{\frac{\dot{\alpha}x^2}{1-p'}}, & 0<x<1, \\
e^{\frac{\dot{\alpha}x^2}{1-p'}}, & otherwise,
\end{array}\r.\]
and
\[u(x)=\l\{\begin{array}{ll}
0, & 0<x<1, \\
e^{\dot{\alpha}x^2}, &otherwise.
\end{array}\r.\]
Then for any $b'>1$ we have
\[[u,v]_{\mathscr{A}_{p,q,\alpha}^1}<\infty, \quad [u,v]_{\mathscr{A}_{p,q,\alpha}^{b'}}=\infty.\]
\end{example}
\begin{proof}
Let's first consider the following
\begin{align*}
[u,v]_{\mathscr{A}_{p,q,\alpha}^{b'}}&=\sup_{(c,c_1)\in\mathscr{Q}_{b'}}\frac{1}{|c_1-c|^{1-\alpha}} \l(\int_{c}^{c_1} u(x)\ddd\gamma'(x)\r)^{1/q}\l(\int_{c}^{c_1} v(x)^{1-p'}\ddd\gamma'(x)\r)^{1/{p'}} \\
&=\sup_{(c,c_1)\in\mathscr{Q}_{b'}}\frac{1}{|c_1-c|^{1-\alpha}} \l(\int_{c}^{c_1} u'(x)\ddd x\r)^{1/q}\l(\int_{c}^{c_1} \sigma'(x) \ddd x\r)^{1/{p'}}
\end{align*}
where $b'>1$ and on $\bR^{+}$
\[\sigma'(x)=\l\{\begin{array}{ll}
1/x, & 0<x<1, \\
1, & otherwise,
\end{array}\r.\quad
u'(x)=\l\{\begin{array}{ll}
0, & 0<x<1, \\
1, &otherwise.
\end{array}\r.\]
We take the interval $(c,c_1):=(0,c')$ where $1<c'<\min\{2,b'\}$. It's easy to see that $(0,c')\in\mathscr{Q}_{b'}$. Hence we conclude
\begin{align*}
[u,v]_{\mathscr{A}_{p,q,\alpha}^{b'}}&\geq \frac{1}{{c'}^{1-\alpha}}\l(\int_1^{c'} \ddd x\r)^{1/q}\l(\int_0^{c'} \frac{1}{x} \ddd x\r)^{1/{p'}} \\
&=\frac{1}{{c'}^{1-\alpha}} \l(c'-1\r)^{1/q} \l(\int_0^{c'} \frac{1}{x} \ddd x\r)^{1/{p'}} \\
&=\infty.
\end{align*}
On the other hand, since
\begin{align*}
[u,v]_{\mathscr{A}_{p,q,\alpha}^{1}}&=\sup_{(c,c_1)\in\mathscr{Q}_1}\frac{1}{|c_1-c|^{1-\alpha}} \l(\int_{c}^{c_1} u(x)\ddd\gamma'(x)\r)^{1/q}\l(\int_{c}^{c_1} v(x)^{1-p'}\ddd\gamma'(x)\r)^{1/{p'}} \\
&=\sup_{(c,c_1)\in\mathscr{Q}_1}\frac{1}{|c_1-c|^{1-\alpha}} \l(\int_{c}^{c_1} u'(x)\ddd x\r)^{1/q}\l(\int_{c}^{c_1} \sigma'(x) \ddd x\r)^{1/{p'}},
\end{align*}
we divide the proof into three cases as follows.

\textbf{Case 1:} If the interval $[c,c_1]\subset [-1,1]$, we have $[u,v]_{\mathscr{A}_{p,q,\alpha}^{1}}=0$ by the definition of $u(x)$;

\textbf{Case 2:} If $c>1$ or $c_1<-1$, we deduce that
\[[u,v]_{\mathscr{A}_{p,q,\alpha}^{1}}\leq\frac{(c_1-c)^{1/q+1/{p'}}}{|c_1-c|^{1-\alpha}}\leq (c_1-c)^{\alpha+1/q-1/p}\leq1;\]

\textbf{Case 3:} If $1\in(c,c_1)$ or $-1\in(c,c_1)$, we only need to prove the result under the situation $1\in(c,c_1)$ by the symmetry. Moreover, by a continuity argument it is enough to consider the behavior of $c_1\to 1$ with $c_1-c=1$. In this case, set $y=c_1-1$. Then it follows that
\begin{align*}
&\frac{1}{|c_1-c|^{1-\alpha}} \l(\int_{c}^{c_1} u(x)\ddd\gamma'(x)\r)^{1/q}\l(\int_{c}^{c_1} v(x)^{1-p'}\ddd\gamma'(x)\r)^{1/{p'}} \\
&=\frac{1}{|c_1-c|^{1-\alpha}} \l(\int_{1}^{1+y} \ddd x\r)^{1/q}\l(\int_{y}^{1} \frac{1}{x}\ddd x+\int_{1}^{1+y} \ddd x\r)^{1/{p'}} \\
&=y^{1/q}\l(-\ln{y}+y\r)^{1/{p'}} \to 0
\end{align*}
as $y\to 0$. Combining all these three cases, we get the desired result $[u,v]_{\mathscr{A}_{p,q,\alpha}^{1}}<\infty$.
\end{proof}

\section{The local Sawyer type weights}\label{DHY_2101_TLSTW}
In this section, we begin with a result about the strict inclusion relation on the local-$a$ testing condition. The similar result on the $\mathscr{A}_{p,q,\alpha}^a$ condition has been proved in Proposition \ref{WZL_2016_prop3.2_2}.
\begin{prop}\label{WZL_2016_prop3.2_1}
Let $0<a<b<\infty$, $0\leq\alpha<d$ and $1<p\leq q<\infty$. Then
\[\mathscr{M}_{p,q,\alpha}^b\subsetneqq\mathscr{M}_{p,q,\alpha}^a.\]
\end{prop}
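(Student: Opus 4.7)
The plan is to split the proof into the containment and the strict separation. The containment $\mathscr{M}_{p,q,\alpha}^b\subset\mathscr{M}_{p,q,\alpha}^a$ is the easy half. Since $a<b$, we have $\mathscr{Q}_a\subset\mathscr{Q}_b$ and consequently $\mathscr{Q}_a(x)\subset\mathscr{Q}_b(x)$ for every $x\in\bR^d$, so that $M_\alpha^a(f)(x)\leq M_\alpha^b(f)(x)$ pointwise for every $f\geq 0$. Plugging this pointwise domination into the definition of the Sawyer-type constant and further shrinking the outer supremum from $\mathscr{Q}_b$ down to $\mathscr{Q}_a$ yields $[u,v]_{\mathscr{M}_{p,q,\alpha}^a}\leq [u,v]_{\mathscr{M}_{p,q,\alpha}^b}$, which is the desired inclusion.

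For the strict separation, I would imitate the construction used in Proposition \ref{WZL_2016_prop3.2_2}. By the emptiness observation immediately preceding that proposition, it is enough to consider the case $1/q-1/p+\alpha\geq 0$; otherwise both classes vanish and strictness is moot. I would therefore work on $\bR^1$ with the same admissible partition $\{Q_n=(B_n,B'_n),\;Q_{n_1}=(B_n,A_n),\;Q_{n_2}=(A'_n,B'_n)\}$ and structurally identical weights $u(x)$ and $v(x)$, adjusting the exponential prefactors so that the Gaussian density $\ddd\gamma$ collapses to Lebesgue measure $\ddd x$ on each cell. The divergence of $[u,v]_{\mathscr{M}_{p,q,\alpha}^b}$ is then obtained by testing on $Q=Q_n\in\mathscr{Q}_b$: for every $x\in Q_{n_1}$ the cube $Q_n$ belongs to $\mathscr{Q}_b(x)$, so
\[M_\alpha^b(v^{1-p'}\chi_{Q_n})(x)\geq \frac{1}{[\gamma(Q_n)]^{1-\dot\alpha}}\int_{Q_{n_2}} v(y)^{1-p'}\ddd\gamma(y).\]
Inserting this lower bound, using $\int_{Q_n}v^{1-p'}\ddd\gamma\sim \int_{Q_{n_2}}v^{1-p'}\ddd\gamma$, and converting $\ddd\gamma$ to $\ddd x$ via the exponential prefactors, one recovers essentially the same lower bound $n^{2k}|x_n|^{-\alpha-1/q+1/p}$ already derived in Proposition \ref{WZL_2016_prop3.2_2}, which tends to $\infty$ for the choice $k=\alpha+1/q-1/p+1$.

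The delicate half is verifying $[u,v]_{\mathscr{M}_{p,q,\alpha}^a}<\infty$. For any $Q\in\mathscr{Q}_a$, I would invoke the geometric claim proved at the end of Proposition \ref{WZL_2016_prop3.2_2}: namely $Q$ cannot meet both $Q_{n_1}$ and $Q_{n_2}$ when $n\geq 2$, and consecutive cells $Q_n$ are separated by a gap of length $a$. Together these ensure that $v^{1-p'}\chi_Q$ is, up to the exponential prefactor, a single cellwise-constant multiple of $\chi_Q$. Hence $M_\alpha^a(v^{1-p'}\chi_Q)(x)$ can be dominated on $Q$ by a single-scale average, and the Sawyer functional collapses to the $\mathscr{A}_{p,q,\alpha}^a$-style expression already bounded in Proposition \ref{WZL_2016_prop3.2_2} by $a^{1/q-1/p+\alpha}$. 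The main obstacle I expect is precisely this pointwise control of $M_\alpha^a$: one must rule out that a sub-cube $Q'\in\mathscr{Q}_a(x)$ sitting inside $Q$ can reach into a remote, higher-density cell of $v^{1-p'}$. The geometric buffer of width $a$ between successive $Q_n$'s, combined with $\ell(Q')\leq am(c_{Q'})\leq a$, is exactly the cushion that forces every such $Q'$ to stay inside one cell and closes the estimate.
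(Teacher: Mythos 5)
Your overall strategy --- the trivial inclusion, the reduction to $1/q-1/p+\alpha\geq 0$ via the emptiness remark, and a strict-separation example built on the same admissible partition $\{Q_n, Q_{n_1}, Q_{n_2}\}$ with the same geometric claim --- is exactly the paper's. The inclusion argument via the pointwise bound $M_\alpha^a(f)\leq M_\alpha^b(f)$ and shrinking the outer supremum is correct.

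The gap is in the separation example, where you assert that ``structurally identical weights'' with only the exponential prefactors adjusted will do, and that both halves of the computation ``recover essentially the same bounds'' as in Proposition \ref{WZL_2016_prop3.2_2}. This is not automatic, because the Sawyer functional weights $\sigma=v^{1-p'}$ and $u$ differently from the $\mathscr{A}_{p,q,\alpha}^a$ functional: $\sigma$ now enters once inside the maximal function (raised to the power $q$ against $u\,\ddd\gamma$) and once in the normalizing factor with exponent $-1/p$ rather than $+1/q'$ or $+1/p'$. The paper accordingly does \emph{not} reuse the exponents $n^{\pm kq}$, $n^{\pm kp}$ of Proposition \ref{WZL_2016_prop3.2_2}; it re-tunes them to $n^{-kq(p-1)}$, $n^{-kq(p+1)}$ for $u$ and $n^{\mp kp(p-1)}$ for $v$, and in doing so it swaps which of $Q_{n_1}$, $Q_{n_2}$ carries the large values of $\sigma$ (in the paper $\sigma\sim n^{kp}$ on $Q_{n_1}$ and $\sim n^{-kp}$ on $Q_{n_2}$). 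Two concrete consequences for your sketch: (i) your intermediate claim $\int_{Q_n}v^{1-p'}\,\ddd\gamma\sim\int_{Q_{n_2}}v^{1-p'}\,\ddd\gamma$ is weight-dependent and is false for the paper's choice, where $\int_{Q_n}\sigma$ is dominated by the long stretch on which $\sigma$ is large; the paper's lower bound instead tests $Q'=Q_{n_1}$ and integrates $\sigma$ over $Q_{n_1}$, pairing it with $\bigl(\int_{Q_{n_2}}\sigma\bigr)^{-1/p}$ to produce the factor $n^{2k}$. (ii) The finiteness over $\mathscr{Q}_a$ does not simply ``collapse to the $\mathscr{A}_{p,q,\alpha}^a$-style expression'': one must check that the powers of $n$ cancel in the new bookkeeping (the paper's estimates (\ref{WZL_2016_prop3.2_1.2})--(\ref{WZL_2016_prop3.2_1.3}) for $M_\alpha^a(\sigma\chi_Q)$, combined with the chosen exponents, give exactly $n^{k}\cdot n^{-k}$ in the first case and $n^{-k}\cdot n^{k}$ in the second), and also handle cubes straddling a cell boundary. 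In short, the architecture is right, but the actual selection and verification of the exponents --- which is the substance of the proposition --- is deferred to an analogy that does not hold verbatim.
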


\begin{proof}
It's trivial that $\mathscr{M}_{p,q,\alpha}^b\subset\mathscr{M}_{p,q,\alpha}^a$ and it is enough to focus on the situation $1/q-1/p+\alpha\geq0$ due to the fact $\mathscr{M}_{p,q,\alpha}^a =\emptyset$ when $1/q-1/p+\alpha<0$. The desired weights are analogous to that in Proposition \ref{WZL_2016_prop3.2_2}. On $\bR^{+}$, we set
\[u(x)=\l\{\begin{array}{ll}
e^{(\dot{\alpha}-1)q|x|^2+|x|^2}, & x\in (0,1), \\
n^{-kq(p-1)}e^{(\dot{\alpha}-1)q|x|^2+|x|^2}, & x\in \l(1+(n-1)a+\sum_{i=1}^{n-1}b_i, 1+(n-1)a+\sum_{i=1}^{n-1}b_i+\frac{b_n-a_n}{2}\r),\\
n^{-kq(p+1)}e^{(\dot{\alpha}-1)q|x|^2+|x|^2}, & x\in \l(1+(n-1)a+\sum_{i=1}^{n-1}b_i+\frac{b_n-a_n}{2}, 1+na+\sum_{i=1}^{n}b_i\r),
\end{array}\r.\]
with $n\geq1$ and then for $n\geq 2$ we set
\[v(x)=\l\{\begin{array}{ll}
e^{\frac{|x|^2}{1-p'}}, & x\in(0,1+b_1), \\
n^{-kp(p-1)}e^{\frac{|x|^2}{1-p'}}, & x\in \l(1+(n-2)a+\sum_{i=1}^{n-1}b_i, 1+(n-1)a+\sum_{i=1}^{n}b_i-\frac{b_n-a_n}{2}\r),\\
n^{kp(p-1)}e^{\frac{|x|^2}{1-p'}}, & x\in \l(1+(n-1)a+\sum_{i=1}^{n}b_i-\frac{b_n-a_n}{2}, 1+(n-1)a+\sum_{i=1}^{n}b_i\r),
\end{array}\r.\]
where $a_n, b_n$ satisfy the same condition (\ref{condition_1}). Then by even extension, we can get $u(x)$ and $v(x)$ on $\bR^1$. For similar reasons we consider the following expression
\begin{equation}\label{WZL_2016_prop3.2_1.1}
\sup_{Q\in\mathscr{Q}_a}\l[\int_Q\l(\sup_{Q'\in\mathscr{Q}_a(x)}\frac{1}{\ell(Q')^{1-\alpha}}\int_{Q'\cap Q} v(y)^{1-p'}\ddd\gamma'(y)\r)^q u(x)\ddd\gamma(x)\r]^{\frac{1}{q}}\l(\int_Q v(x)^{1-p'}\ddd\gamma(x)\r)^{-\frac{1}{p}}
\end{equation}
only on $\bR^{+}$. Using the same notations as in Proposition \ref{WZL_2016_prop3.2_2}, we have
\begin{equation}\label{DHY_2101_equ_1}
u(x)\leq e^{(\dot{\alpha}-1)q|x|^2+|x|^2},\quad v(x)^{1-p'}\leq e^{|x|^2}
\end{equation}
on $(0,1+b_1+a)$ and on $\bR^1\setminus \{\cup Q_{n_1}\cup Q_{n_2}\}=:S$. Hence if $Q\subset S$ or if $Q\cap Q_1\neq\emptyset$ which means $Q\subset (0,1+b_1+a)$, we conclude that
\begin{align*}
&\l[\int_Q\l(\sup_{Q'\in\mathscr{Q}_a(x)}\frac{1}{\ell(Q')^{1-\alpha}}\int_{Q'\cap Q} v(y)^{1-p'}\ddd\gamma'(y)\r)^q u(x)\ddd\gamma(x)\r]^{\frac{1}{q}}\l(\int_Q v(x)^{1-p'}\ddd\gamma(x)\r)^{-\frac{1}{p}} \\
&\lesssim \l[\l(\frac{\ell(Q'\cap Q)}{\ell(Q')^{1-\alpha}}\r)^q \ell(Q)\r]^{1/q} \ell(Q)^{-1/p} \\
&\leq \frac{\ell(Q'\cap Q)}{\ell(Q')^{1-\alpha}} \ell(Q)^{1/q-1/p} \\
&\leq \ell(Q'\cap Q)^{\alpha}\ell(Q)^{1/q-1/p} \leq \ell(Q)^{\alpha+1/q-1/p} \leq a^{\alpha+1/q-1/p}
\end{align*}
by the estimates (\ref{DHY_2101_equ_1}). Thereby for the expression (\ref{WZL_2016_prop3.2_1.1}), we only need to focus on the case $Q\cap Q_n\neq\emptyset$ with $n\geq 2$ and furthermore the situations $Q\cap Q_{n_1}\neq \emptyset$ or $Q\cap Q_{n_2}\neq\emptyset$ where $n\geq2$.

If $Q\cap Q_{n_1}\neq \emptyset$ with $n\geq2$, then $Q'\in \mathscr{Q}_a$ yields that $Q'\cap Q_{n_2}=\emptyset$ and
\begin{equation}\label{WZL_2016_prop3.2_1.2}
\l(\frac{1}{\ell(Q')^{1-\alpha}}\int_{Q'\cap Q} v(y)^{1-p'}\ddd\gamma'(y)\r)^q \sim_{a} \l(n^{kp}e^{(1-\dot{\alpha})|x|^2} \frac{\ell(Q'\cap Q)}{\ell(Q')^{1-\alpha}}\r)^q.
\end{equation}
If $Q\cap Q_{n_2}\neq\emptyset$ with $n\geq2$, from $n^{-kp(p-1)}\leq n^{kp(p-1)}$ and $1-p'\leq 0$ we similarly have
\begin{equation}\label{WZL_2016_prop3.2_1.3}
\l(\frac{1}{\ell(Q')^{1-\alpha}}\int_{Q'\cap Q} v(y)^{1-p'}\ddd\gamma'(y)\r)^q \lesssim_{a} \l(n^{kp}e^{(1-\dot{\alpha})|x|^2}\frac{\ell(Q'\cap Q)}{\ell(Q')^{1-\alpha}}\r)^q.
\end{equation}
Based on these two estimates above, when $n\geq2$, for every $Q\in \mathscr{Q}_a$ we divide the proof into two cases. Firstly, if $Q\cap Q_{n_1}\neq \emptyset$, by the estimate (\ref{WZL_2016_prop3.2_1.2}) we have
\begin{align*}
&\l[\int_Q\l(\sup_{Q'\in\mathscr{Q}_a(x)}\frac{1}{\ell(Q')^{1-\alpha}}\int_{Q'\cap Q} v(y)^{1-p'}\ddd\gamma'(y)\r)^q u(x)\ddd\gamma(x)\r]^{1/q} \\
&\lesssim_{a} \l[\int_{Q\cap Q_{n_1}}\l(n^{kp}e^{(1-\dot{\alpha})|x|^2}\frac{\ell(Q'\cap Q)}{\ell(Q')^{1-\alpha}}\r)^q e^{(\dot{\alpha}-1)q |x|^2+|x|^2} n^{-kq(p-1)}e^{-|x|^2}\ddd x\r]^{1/q}  \\
&= n^k \frac{\ell(Q'\cap Q)}{\ell(Q')^{1-\alpha}} |Q\cap Q_{n_1}|^{1/q} \leq n^k \frac{\ell(Q'\cap Q)}{\ell(Q')^{1-\alpha}}\ell(Q)^{1/q}.
\end{align*}
Then from the assumption $1/q-1/p+\alpha\geq0$ we directly get
\begin{align*}
[u,v]_{\mathscr{M}_{p,q,\alpha}^a}&\lesssim_{a} n^k \frac{\ell(Q'\cap Q)}{\ell(Q')^{1-\alpha}}\ell(Q)^{1/q} \cdot \l(\int_Q v(x)^{1-p'}\ddd\gamma(x)\r)^{-1/p} \\
&\sim n^k \frac{\ell(Q'\cap Q)}{\ell(Q')^{1-\alpha}}\ell(Q)^{1/q} \cdot n^{-k} |Q|^{-1/p} \\
&= \frac{\ell(Q'\cap Q)}{\ell(Q')^{1-\alpha}}\ell(Q)^{1/q-1/p} \leq a^{\alpha+1/q-1/p}.
\end{align*}
Secondly, if $Q\cap Q_{n_2}\neq \emptyset$, then the inequality (\ref{WZL_2016_prop3.2_1.3}) holds. Noticing that $u(x)=n^{-kq(p+1)}e^{(\dot{\alpha}-1)q|x|^2+|x|^2}$ on $Q_{n_2}$, we can use an analogous method as in the first case to obtain
\begin{align*}
[u,v]_{\mathscr{M}_{p,q,\alpha}^a}&\lesssim_{a} n^{-k} \frac{\ell(Q'\cap Q)}{\ell(Q')^{1-\alpha}} \ell(Q)^{1/q} \cdot \l(\int_Q v(x)^{1-p'}\ddd\gamma(x)\r)^{-1/p} \\
&\lesssim_{a} n^{-k} \frac{\ell(Q'\cap Q)}{\ell(Q')^{1-\alpha}} \ell(Q)^{1/q} \cdot n^{k} |Q|^{-1/p} \\
&\leq a^{\alpha+1/q-1/p}.
\end{align*}
Combining these two cases together with the former cases $Q\cap Q_1\neq\emptyset$ and $Q\subset S$, we have proved the result
\[[u,v]_{\mathscr{M}_{p,q,\alpha}^a}\lesssim a^{\alpha+1/q-1/p}<\infty.\]
Next we consider the term $[u,v]_{\mathscr{M}_{p,q,\alpha}^b}$. By taking $Q=Q_n\in \mathscr{Q}_b$ and $Q'=Q_{n_1}\in \mathscr{Q}_b(y)$ for $y\in Q_{n_1}$, similar to the proof in Proposition \ref{WZL_2016_prop3.2_2}, we deduce the following
\begin{align*}
[u,v]_{\mathscr{m}_{p,q,\alpha}^b}&\geq\l[\int_{Q_{n_1}}\l(\frac{1}{\ell(Q_{n_1})^{d-\alpha}}\int_{Q_{n_1}} v(y)^{1-p'}\ddd \gamma'(y)\r)^q u(x)\ddd \gamma(x)\r]^{1/q}\cdot \l(\int_{Q_{n_2}}v(x)^{1-p'}\ddd\gamma(x)\r)^{-1/p} \\
&\sim_{b} \l(\int_{Q_{n_1}}n^{kpq}e^{(1-\dot{\alpha})q|x|^2}\ell(Q_{n_1})^{\alpha q}\cdot n^{-kq(p-1)}e^{(\dot{\alpha}-1)q|x|^2+|x|^2}e^{-|x|^2}\ddd x\r)^{1/q}\cdot n^k|Q_{n_2}|^{-1/p} \\
&=n^{2k}\ell(Q_{n_1})^{\alpha}\cdot |Q_{n_1}|^{1/q} |Q_{n_2}|^{-1/p} \\
&\geq \l(\frac{b-a}{2}\r)^{\alpha+1/q-1/p} n^{2k} (1+na+nb)^{-\alpha-1/q+1/p},
\end{align*}
where we have used the facts that $\alpha+1/q-1/p\geq0$ and
\[|x_n|\leq 1+na+\sum_{i=1}^n b_i\leq 1+na+nb.\]
Finally taking $k=\alpha+1/q-1/p+1>0$, we get the desired weights.
\end{proof}

Then we consider the two-weight strong-type estimate for $M_{\alpha}^a$ with respect to the local-$a$ testing condition on Gaussian Lebesgue spaces. To use the dyadic analysis in this situation, we need the following lemma first.
\begin{lemma}\label{GM_2001_lemma3.2}
Consider $a\in(0,\infty)$, $\alpha\in[0,d)$, $f\in L_{\mathrm{loc}}^1(\bR^d, \gamma')$ and $f\geq0$. If for some cube $Q\in\mathscr{Q}_a$ and for some $t>0$ we have
\[\frac{1}{\ell(Q)^{d-\alpha}}\int_Q f(y) \ddd\gamma'(y)>t,\]
then there exists a dyadic cube $P\in \mathscr{Q}_{2a+3\sqrt{d}a^2}$ such that $Q\subset 3P$ and
\[\frac{1}{\ell(P)^{d-\alpha}}\int_{P} f(y)\ddd\gamma'(y)>2^{\alpha-2d}t.\]
\end{lemma}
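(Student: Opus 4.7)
The plan is to run the standard single-cube dyadic covering trick and then carefully verify that the dyadic cube it produces still satisfies the Gaussian admissibility condition with the enlarged parameter $2a+3\sqrt{d}a^2$.

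First I would pick the integer $k$ with $\ell(Q)\leq 2^k<2\ell(Q)$ and consider the at most $2^d$ dyadic cubes of side length $2^k$ which meet $Q$. Applying a pigeonhole argument to the decomposition $\int_Q f\,\ddd\gamma'=\sum_i\int_{P_i\cap Q} f\,\ddd\gamma'$ produces one such cube $P$ satisfying
\[\int_{P} f(y)\,\ddd\gamma'(y)\geq 2^{-d}\int_{Q} f(y)\,\ddd\gamma'(y).\]
The inclusion $Q\subset 3P$ is then purely geometric: because $P\cap Q\neq\emptyset$ and $\ell(Q)\leq\ell(P)$, on each coordinate axis the projection of $Q$ lies within distance $\ell(P)$ of the projection of $P$, hence inside the projection of $3P$. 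The averaged lower bound follows immediately from $\ell(P)<2\ell(Q)$ via
\[\frac{1}{\ell(P)^{d-\alpha}}\int_{P} f\,\ddd\gamma'\geq \frac{2^{-d}}{(2\ell(Q))^{d-\alpha}}\int_{Q} f\,\ddd\gamma'>2^{\alpha-2d}\,t.\]

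The main obstacle, and the only piece specific to the Gaussian setting, will be verifying the admissibility $P\in\mathscr{Q}_{2a+3\sqrt{d}a^2}$. Since $P\cap Q\neq\emptyset$ and $\ell(P)<2\ell(Q)$, one gets
\[|c_P-c_Q|\leq \tfrac{\sqrt{d}}{2}\bigl(\ell(P)+\ell(Q)\bigr)<\tfrac{3\sqrt{d}}{2}\,\ell(Q)\leq \tfrac{3\sqrt{d}a}{2}\,m(c_Q).\]
A short case analysis on whether $|c_Q|\leq 1$ or $|c_Q|>1$, and likewise for $|c_P|$, combined with $m(x)=\min\{1,1/|x|\}$, then shows
\[\frac{m(c_Q)}{m(c_P)}\leq 1+\tfrac{3\sqrt{d}a}{2}.\]
Paired with $\ell(P)<2\ell(Q)\leq 2a\,m(c_Q)$, this yields $\ell(P)\leq (2a+3\sqrt{d}a^2)\,m(c_P)$, so $P$ is admissible at the enlarged scale and the lemma is complete.
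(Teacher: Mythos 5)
Your proposal is correct and follows essentially the same route as the paper: the same dyadic pigeonhole over the at most $2^d$ cubes of the chosen generation meeting $Q$, the same geometric inclusion $Q\subset 3P$, and the same constant $2^{\alpha-2d}$ from $\ell(P)<2\ell(Q)$. Your admissibility check, phrased as a bound on the ratio $m(c_Q)/m(c_P)$ via the center estimate $|c_P-c_Q|\le\tfrac{\sqrt d}{2}(\ell(P)+\ell(Q))$, is just a repackaging of the paper's four-case analysis of $\ell(P)\,|c_P|$ and yields the identical constant $2a+3\sqrt d\,a^2$.
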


\begin{proof}
Take $k\in\bZ$ such that $2^{k-1}\leq \ell(Q)<2^k$. Then there exist at most $2^d$ dyadic cubes $\{P_i\}_{i=1}^N$ which have the following two properties:
\[\ell(P_i)= 2^k \quad \text{and} \quad Q\cap P_i\neq \emptyset.\]
From $\ell(Q)<\ell(P_i)\leq 2\ell(Q)$ and $Q\cap P_i\neq \emptyset$ we conclude that $Q\subset 3P_i$. Moreover, it is easy to see that at least one of these dyadic cubes, say $P$, satisfies the condition
\[\frac{\int_P f(y) \ddd\gamma'(y)}{\ell(Q)^{d-\alpha}}> \frac{t}{2^d}.\]
As a consequence, for this dyadic cube $P$ we have
\[\frac{1}{\ell(P)^{d-\alpha}}\int_P f(y) \ddd\gamma'(x)>\frac{\ell(Q)^{d-\alpha}t}{\ell(P)^{d-\alpha}2^d}\geq2^{\alpha-2d} t.\]
Then it remains to prove that $P\in \mathscr{Q}_{2a+3\sqrt{d}a^2}$. We divide the proof into four cases as follows.

\textbf{Case 1:} $|c_{Q}|\leq 1$ and $|c_P|\leq 1$, then $\ell(Q)\leq a$ and $\ell(P)\leq2\ell(Q)\leq2a$;

\textbf{Case 2:} $|c_{Q}|\leq 1$ and $|c_P|> 1$, then $\ell(Q)\leq a$ and
\[\ell(P)\cdot|c_P|\leq 2\ell(Q)\cdot\l(|c_Q|+\frac{3\sqrt{d}\ell(Q)}{2}\r)=2\ell(Q)|c_Q|+2\ell(Q)\frac{3\sqrt{d}\ell(Q)}{2}\leq 2a+3\sqrt{d} a^2;\]

\textbf{Case 3:} $|c_{Q}|> 1$ and $|c_P|\leq 1$, then $\ell(Q)\leq a/|c_Q|\leq a$ and $\ell(P)\leq2\ell(Q)\leq2a$;

\textbf{Case 4:} $|c_{Q}|> 1$ and $|c_P|> 1$, then $\ell(Q)\leq a/|c_Q|\leq a$ and
\[\ell(P)\cdot|c_P|\leq 2\ell(Q)\cdot\l(|c_Q|+\frac{3\sqrt{d}\ell(Q)}{2}\r)=2\ell(Q)|c_Q|+2\ell(Q)\frac{3\sqrt{d}\ell(Q)}{2}\leq 2a+3\sqrt{d} a^2.\]
Combining all these four cases, we obtain the desired conclusion $P\in \mathscr{Q}_{2a+3\sqrt{d}a^2}$.
\end{proof}

On the basis of this Lemma \ref{GM_2001_lemma3.2}, by following the scheme of the proof in \cite[Theorem 3.1]{GM_2001} and using the radialization method mentioned before, we prove the two-weighted strong-type boundedness of the local fractional maximal operator $M_{\alpha}^a$ on Gaussian Lebesgue spaces.
\begin{proof}[\textbf{Proof of Theorem \ref{GM_2001_thm3.1}}]
Without loss of generality, we can assume that $f\in L^p(\bR^d, v,\gamma)$ is a nonnegative bounded function with compact support. Thereby we conclude that $M_{\alpha}^a(f)(x)$ is finite $\gamma$-almost everywhere. Decompose $\bR^d$ by
\[\bR^d=\bigcup_{k\in\bZ} \Omega_k, \quad \Omega_k=\{x\in\bR^d: 2^k<M_{\alpha}^a(f)(x)\leq 2^{k+1}\}.\]
Then for every $k\in\bZ$ and every $x\in\Omega_k$, there exists a cube $Q_{x}^k\in\mathscr{Q}_a$ satisfying the condition
\[\frac{1}{\ell(Q_{x}^k)^{d-\alpha}}\int_{Q_{x}^k} f(y)\ddd\gamma'(y)>2^k.\]
By Lemma \ref{GM_2001_lemma3.2} we get a dyadic cube $P_{x}^k\in\mathscr{Q}_{2a+3\sqrt{d}a^2}$ such that $Q_{x}^k\subset 3P_{x}^k$ and
\begin{equation}\label{GM_2001_thm3.1_1}
\frac{1}{\ell(P_{x}^k)^{d-\alpha}}\int_{P_{x}^k} f(y)\ddd\gamma'(y)>2^{\alpha-2d}2^k.
\end{equation}
Notice that $\ell(P_{x}^k)\leq 2a+3\sqrt{d}a^2$. Hence for every fixed $k$, there is a subcollection of maximal disjoint dyadic cubes $\{P_j^k\}_j$ in the sense that for any $Q_{x}^k$ there exists a $P_j^k$ satisfying $Q_{x}^k\subset 3P_{j}^k$. This construction yields that $\Omega_k\subset \bigcup_{j} 3P_{j}^k$. Thus we can decompose $\Omega_k$ by defining
\[E_{1}^k=3P_{1}^k\bigcap \Omega_k, E_2^k=\l(3P_2^k\setminus 3P_1^k\r)\bigcap \Omega_k,\ldots, E_j^k=\l(3P_j^k\setminus \bigcup_{r=1}^{j-1} 3P_r^k\r)\bigcap \Omega_k, \ldots.\]
In conclusion we obtain
\[\bR^d=\bigcup_{k\in\bZ} \Omega_k=\bigcup_{j,k} E_j^k\]
where the sets $E_j^k$ are disjoint and $\Omega_k$ are disjoint. For a fixed integer $K>0$, define
\[\Lambda_K=\{(j,k)\in \bN\times\bZ: |k|\leq K\}.\]
By using the fact $E_j^k\subset\Omega_k$ and the condition (\ref{GM_2001_thm3.1_1}) we deduce
\begin{align*}
I_K&:=\int_{\bigcup_{k=-K}^{K} \Omega_k} \l(M_{\alpha}^a(f)(x)\r)^q u(x)\ddd\gamma(x)=\sum_{(j,k)\in\Lambda_K} \int_{E_j^k} \l(M_{\alpha}^a(f)(x)\r)^q u(x)\ddd\gamma(x) \\
&\leq \sum_{(j,k)\in\Lambda_K} \int_{E_j^k} u(x)\ddd\gamma(x) 2^{(k+1)q} \\
&\leq 2^{(2d-\alpha+1)q} \sum_{(j,k)\in\Lambda_K} \int_{E_j^k} u(x)\ddd\gamma(x)\l(\frac{1}{\ell(P_{j}^k)^{d-\alpha}}\int_{P_{j}^k} f(y)\ddd\gamma'(y)\r)^q \\
&= 2^{(2d-\alpha+1)q}3^{(d-\alpha)q} \sum_{(j,k)\in\Lambda_K} \int_{E_j^k} u(x)\ddd\gamma(x) \l(\frac{1}{\ell(3P_{j}^k)^{d-\alpha}}\int_{3P_{j}^k} \sigma(y)\ddd\gamma'(y)\r)^q \l(\frac{\int_{P_{j}^k} (f\sigma^{-1})(y)\sigma(y)\ddd\gamma'(y)}{\int_{3P_{j}^k} \sigma(y)\ddd\gamma'(y)}\r)^q \\
&=2^{(2d-\alpha+1)q}3^{(d-\alpha)q} \int_{\mathscr{Y}} T_K(f\sigma^{-1})^q \ddd\nu,
\end{align*}
where $\mathscr{Y}=\bN\times\bZ$, $\sigma(x)=v(x)^{1-p'}$. Furthermore, the measure $\nu$ in $\mathscr{Y}$ is given by
\[\nu(j,k):=\int_{E_j^k} u(x)\ddd\gamma(x) \l(\frac{1}{\ell(3P_{j}^k)^{d-\alpha}}\int_{3P_{j}^k} \sigma(y)\ddd\gamma'(y)\r)^q\]
and the operator $T_K$ for measurable function $h$ is defined by the following three equivalent forms
\begin{align*}
T_K(h)(j,k)&:=\frac{\int_{P_{j}^k} h(y)\sigma(y)\ddd\gamma'(y)}{\int_{3P_{j}^k} \sigma(y)\ddd\gamma'(y)}\chi_{\Lambda_K}(j,k) \\
&:\sim_{a,d} \frac{\int_{P_{j}^k} h(y)\sigma(y)\ddd y}{\int_{3P_{j}^k} \sigma(y)\ddd y}\chi_{\Lambda_K}(j,k) \\
&:\sim_{a,d} \frac{\int_{P_{j}^k} h(y)\sigma(y)\ddd\gamma(y)}{\int_{3P_{j}^k} \sigma(y)\ddd\gamma(y)}\chi_{\Lambda_K}(j,k)
\end{align*}
due to the fact that $P_{j}^k\in\mathscr{Q}_{2a+3\sqrt{d}a^2}$. In this way, if we can prove that the operator $T_K$ is bounded from $L^p(\bR^d,\sigma, \gamma)$ to $L^q(\mathscr{Y},\nu)$ independently of $K$, we can obtain
\[I_K\lesssim \int_{\mathscr{Y}} T_K(f\sigma^{-1})^q \ddd\nu \lesssim \l(\int_{\bR^d} (f\sigma^{-1})^p \sigma\ddd\gamma\r)^{q/p}= \l(\int_{\bR^d} f(x)^p v(x)\ddd\gamma(x)\r)^{q/p}.\]
Based on the uniformity of $K$ and the monotone convergence theorem, we shall prove the desired result by letting $K\to\infty$.

Consequently, it remains to prove the uniform boundedness of $T_K$. It is easy to see that
\[T_K: L^{\infty}(\bR^d,\sigma,\gamma)\to L^{\infty}(\mathscr{Y},\nu)\]
with norm equal or less than $1$. Marcinkiewicz interpolation theorem yields that it is sufficient to show
\[T_K: L^{1}(\bR^d,\sigma,\gamma)\to L^{{q}/{p},\infty}(\mathscr{Y},\nu)\]
with norm independent of $K$. In other words, it is sufficient to prove that
\[\nu\l\{(j,k)\in\mathscr{Y}: T_K(h)(j,k)>\lambda\r\}\lesssim \l(\frac{1}{\lambda}\int_{\bR^d} |h(x)|\sigma\ddd\gamma(x)\r)^{q/p}\]
holds for all $\lambda>0$. To establish this estimate, for the fixed nonnegative bounded function $h(x)$ with compact support, set
\[F_{\lambda}=\l\{(j,k)\in\mathscr{Y}: T_K(h)(j,k)>\lambda\r\}=\l\{(j,k)\in\Lambda_K: T_K(h)(j,k)>\lambda\r\}.\]
Noticing the fact $E_j^k\subset 3P_j^k \in\mathscr{Q}_{6a+9\sqrt{d}a^2}$, we conclude
\begin{align*}
\nu(F_{\lambda})&=\sum_{(j,k)\in F_{\lambda}} \int_{E_j^k} u(x)\ddd\gamma(x) \l(\frac{1}{\ell(3P_{j}^k)^{d-\alpha}}\int_{3P_{j}^k} \sigma(y)\ddd\gamma'(y)\r)^q \\
&=\sum_{(j,k)\in F_{\lambda}} \int_{E_j^k} \l(\frac{1}{\ell(3P_{j}^k)^{d-\alpha}}\int_{3P_{j}^k} \sigma(y)\ddd\gamma'(y)\r)^q u(x) \ddd\gamma(x) \\
&\leq \sum_{(j,k)\in F_{\lambda}} \int_{E_j^k} \l(M_{\alpha}^{6a+9\sqrt{d}a^2}\l(\sigma \chi_{3P_j^k}\r)(x)\r)^q u(x) \ddd\gamma(x).
\end{align*}
Taking $\ell(P_j^k)\leq 2a+3\sqrt{d}a^2$ into account, we can extract a maximal disjoint subcollection $\{P_i\}_i$ from the collection $\{P_j^k: (j,k)\in F_{\lambda}\}$ in the sense that for every $(j,k)\in F_{\lambda}$ there exists an $i$ such that $P_{j}^k\subset P_i$. By the disjointness of $E_j^k$, the construction of $P_i$ and the fact $(u,v)\in \mathscr{M}_{p,q,\alpha}^{6a+9\sqrt{d}a^2}$ we can obtain
\begin{align*}
\nu(F_{\lambda})&\leq \sum_i\sum_{P_j^k\subset P_i} \int_{E_j^k} \l(M_{\alpha}^{6a+9\sqrt{d}a^2}\l(\sigma \chi_{3P_j^k}\r)(x)\r)^q u(x) \ddd\gamma(x) \\
&\leq \sum_i \int_{3P_i} \l(M_{\alpha}^{6a+9\sqrt{d}a^2}\l(\sigma \chi_{3P_i}\r)(x)\r)^q u(x) \ddd\gamma(x) \\
&\lesssim \sum_i\l(\int_{3P_i}\sigma(x) \ddd\gamma(x)\r)^{q/p}.
\end{align*}
Recall that these cubes $P_i$ were extracted from $\l\{P_j^k: (j,k)\in F_{\lambda}\r\}$ where $T_K(h)(j,k)>\lambda$. This yields
\[\int_{3P_i} \sigma(x)\ddd\gamma(x)<\frac{1}{\lambda}\int_{P_i}h(x)\sigma(x)\ddd\gamma(x).\]
Then by using $p\leq q$ and the disjointness of $P_i$ we shall conclude
\begin{align*}
\nu(F_{\lambda})&\lesssim \sum_i \l(\frac{1}{\lambda}\int_{P_i}h(x)\sigma(x)\ddd\gamma(x)\r)^{q/p} \\
&\leq \l(\sum_i \frac{1}{\lambda}\int_{P_i}h(x)\sigma(x)\ddd\gamma(x)\r)^{q/p} \\
&\leq \l(\frac{1}{\lambda}\int_{\bR^d}h(x)\sigma(x)\ddd\gamma(x)\r)^{q/p}.
\end{align*}
All these estimates above are independent of $K$, in other words, we have completed the proof.
\end{proof}
\begin{proof}[\textbf{Proof of Theorem \ref{GM_2001_thm6.5}}]
Assume that $(u,v)\in \mathscr{M}_{p,q,\alpha-\varepsilon}^{b}$ for some $b>6a+9\sqrt{d}a^2$ and $\varepsilon>0$. First, the fact $\gamma(Q)<1$ yields that
\[\mathscr{M}_{p,q,\alpha_1}^{b}\subset \mathscr{M}_{p,q,\alpha_2}^{b}\]
if $0<\alpha_1<\alpha_2<d$. Hence it is sufficient to consider the case $0<\varepsilon<\min\{\alpha, d-\alpha\}$. Since $b>6a+9\sqrt{d}a^2$, we can choose $\tilde{a}=\tilde{a}(a,b)>a$ such that $6\tilde{a}+9\sqrt{d}\tilde{a}^2<b$. For example, we can take
\[\tilde{a}=\frac{\frac{ab}{6a+9\sqrt{d}a^2}+a}{2}.\]
Then using Lemma \ref{LM_2020_lemma3.4} with
\[\tilde{b}=\tilde{a}, \quad \alpha_2-\alpha=\alpha-\alpha_1=\varepsilon\]
and using the $\holder$'s inequality with $q_1=q_2=2q$ we conclude that
\begin{align*}
\l\|\tilde{I}_{\alpha}^a(f)\r\|_{L^{q}(\bR^d, u, \gamma)}
&\lesssim \l\|\l(M_{\alpha+\varepsilon}^{\tilde{a}}(f)(\cdot)\r)^{1/2}\l(M_{\alpha-\varepsilon}^a(f)(\cdot)\r)^{1/2}\r\|_{L^{q}(\bR^d, u, \gamma)} \\
&\leq \l\|\l(M_{\alpha+\varepsilon}^{\tilde{a}}(f)(\cdot)\r)^{1/2}\r\|_{L^{q_1}(\bR^d, u, \gamma)} \l\|\l(M_{\alpha-\varepsilon}^a(f)(\cdot)\r)^{1/2}\r\|_{L^{q_2}(\bR^d, u, \gamma)} \\
&=\l\|M_{\alpha+\varepsilon}^{\tilde{a}}(f)\r\|^{1/2}_{L^{q}(\bR^d, u, \gamma)} \l\|M_{\alpha-\varepsilon}^a(f)\r\|^{1/2}_{L^{q}(\bR^d, u, \gamma)} \\
&\lesssim \|f\|_{L^p(\bR^d, u, \gamma)},
\end{align*}
where the last inequality comes from Theorem \ref{GM_2001_thm3.1}.
\end{proof}
As we have discussed in Question \ref{DHY_2101_question1}, there is also a similar problem for the local Sawyer type weights. We state the question as follows.
\begin{question}\label{DHY_2101_question2}
Is $(u,v)\in\mathscr{M}_{p,q,\alpha}^a$ rather than $(u,v)\in\mathscr{M}_{p,q,\alpha}^{6a+9\sqrt{d}a^2}$ sufficient for the two-weight strong-type boundedness of $M_{\alpha}^a$?
\end{question}
It is easy to see that $(u,v)\in\mathscr{M}_{p,q,\alpha}^a$ is necessary for the two-weight strong-type boundedness of $M_{\alpha}^a$. The parameter $6a+9\sqrt{d}a^2$ in our theorem may be smaller by finer estimates. We expect it can be $a$ but have no idea for this yet.

\bigskip\bigskip

\subsection*{Acknowledgements} This work was supported by National Natural Science Foundation of China (Grant Nos. 11871452 and 12071473), Beijing Information Science and Technology University Foundation (Grant Nos. 2025031).

\bigskip\bigskip

\vspace{-0.4cm}{\footnotesize

\begin{flushleft}
\vspace{0.3cm}\textsc{Boning Di\\
School of Mathematical Sciences\\
University of Chinese Academy of Sciences\\
Beijing, 100049\\
P. R. China}
	
\vspace{0.3cm}\textsc{Qianjun He\\
School of Applied Science\\
Beijing Information Science and Technology University\\
Beijing, 100192\\
P. R. China}
	
\emph{E-mail address}: \textsf{heqianjun16@mails.ucas.ac.cn}
	
\vspace{0.3cm}\textsc{Dunyan Yan\\
School of Mathematical Sciences\\
University of Chinese Academy of Sciences\\
Beijing, 100049\\
P. R. China}
\end{flushleft}

\end{document}